\documentclass{amsart}
\usepackage{amssymb}

\textwidth	480pt
\textheight	660pt
\oddsidemargin	-15pt
\evensidemargin -15pt
\topmargin	-20pt

\newcommand{\w}{\omega}
\newcommand{\IN}{\mathbb N}
\newcommand{\diam}{\mathrm{diam}}

\newcommand{\De}{\mathsf{I\hskip-1pt R}}
\newcommand{\Tau}{\mathcal T}
\newcommand{\Ra}{\Rightarrow}
\newcommand{\IR}{\mathsf{I\hskip-1pt R}}
\newcommand{\R}{\mathsf R}
\newcommand{\e}{\varepsilon}
\newcommand{\vH}{\vec{\mathcal H}}
\newcommand{\vTau}{\vec{\mathcal T}}
\newcommand{\Erdos}{\mbox{\sf{Erd\H os}}}
\newcommand{\Burr}{\mbox{\sf Burr}}
\newcommand{\E}{{\mathbb E}}
\newcommand{\dist}{\mathrm{dist}}

\newcommand{\ddiam}{\bar{\bar\chi}}
\newcommand{\A}{\mathcal A}

\newcommand{\beq}[1]{\begin{equation}\label{#1}}
\newcommand{\eeq}{\end{equation}}

\newtheorem{theorem}{Theorem}[section]
\newtheorem{proposition}[theorem]{Proposition}
\newtheorem{corollary}[theorem]{Corollary}
\newtheorem{problem}[theorem]{Problem}
\newtheorem{question}[theorem]{Question}
\newtheorem{lemma}[theorem]{Lemma}
\theoremstyle{definition}

\newtheorem{remark}[theorem]{Remark}

\title{Isometric copies of directed trees in orientations of graphs}
\author{Taras Banakh, Adam Idzik, Oleg Pikhurko, Igor Protasov, Krzysztof
Pszczo\l a}
\address{T.Banakh: Institute of Mathematics, Jan Kochanowski University in Kielce (Poland) and Ivan Franko University of Lviv (Ukraine)}
\email{t.o.banakh@gmail.com}
\address{A.Idzik:  Institute of Mathematics, Jan Kochanowski University in Kielce (Poland) and Institute of Computer Science, Polish Academy of Sciences, Warsaw (Poland)}
\email{adidzik@gmail.com}
\address{O.Pikhurko: Mathematics Institute and DIMAP, University of Warwick, Coventry, UK}
\thanks{O.~Pikhurko was supported by ERC
grant~306493 and EPSRC grant~EP/K012045/1.}
\email{O.Pikhurko@warwick.ac.uk}

\address{I.Protasov: Faculty of Cybernetics, Taras Shevchenko National University in Kyiv, Ukraine}
\email{i.v.protasov@gmail.com}

\address{K.Pszczo\l a: Instytut Matematyki i Kryptologii, Wojskowa Akademia Techniczna, Warsaw, 
Poland}
\email{pszczola@fr.pl}

\keywords{Orientation of a graph, directed tree, isometric embedding, girth, chromatic number}
\subjclass{05C20; 05C55; 05C80}

\begin{document}
\begin{abstract}
The \emph{isometric Ramsey number} $\IR(\vec {\mathcal H})$ of a family $\vec{\mathcal H}$  of
digraphs is the smallest number of vertices in a graph $G$ such that any
orientation of the
edges of $G$ contains every member of $\vec H$ in the distance-preserving way.
We observe that for any finite family $\vec{\mathcal H}$ of finite acyclic graphs the  isometric Ramsey number $\IR(\vec {\mathcal H})$ is finite, and present upper bounds for $\IR(\vec {\mathcal H})$ in some special cases. For example, we show
that the
isometric Ramsey
number of the family of all oriented trees with $n$ vertices is at most
$n^{2n+o(n)}$.
\end{abstract}
\maketitle

\section{Introduction}

In this paper we consider the ``isometric'' version of the result of Cochand and Duchet \cite{CD}  who proved (generalizing a result of R\"odl \cite{Rodl}) that for every acyclic digraph $\vec H$ there exists a finite graph $G$ such that every orientation of $G$ contains an isomorphic copy of $\vec H$.

First we recall the necessary definitions from Graph Theory. A {\em graph}
is a pair $G=(V_G,E_G)$ consisting of a set $V_G$ of {\em vertices}
and a set $E_G$ of two-element subsets of $V_G$, called the {\em edges} of
$G$. By a {\em digraph} we will mean a pair $\vec G=(V_{\vec G},E_{\vec G})$
consisting of
a set $V_{\vec G}$ of vertices and a set $E_{\vec G}\subset V_{\vec G}\times
V_{\vec G}$ of {\em directed edges}, where neither loops $(x,x)$, nor pairs of
opposite arcs $(x,y)$ and $(y,x)$ are allowed.
 By an {\em orientation} of a graph
$G=(V_G,E_G)$ we understand a function $\vec{\cdot}:E_G\to V_G^2$ assigning to
each edge $e\in E_G$ an ordered pair $\vec e=(a,b)\in V_G^2$ such that
$e=\{a,b\}$. In this case the pair $\vec G=(V_G,\{\vec e\}_{e\in E_G})$ is a
digraph called an {\em orientation} of $G$.

A sequence $(v_0,\dots,v_n)$ of distinct vertices of a graph $G$ is called a
{\em path} in $G$ if for every positive $i\le n$ the unordered pair
$\{v_{i-1},v_i\}$ is an edge of $G$.
 The {\em length} of the path $(v_0,\dots,v_n)$ is $n$, that is, the number of
 edges. The
\emph{distance} $d_G(x,y)$ between two vertices $v,u$ of a graph $G$ is the smallest length of a path in $G$ connecting the vertices $v$ and $u$. If $u$ and $v$ cannot be connected by a path, then we write $d_G(x,y)=\infty$ and assume that $\infty>n$ for all $n\in\w$. A graph $G$ is called {\em connected} if any two vertices $u,v$ can be connected by a path in $G$. The distance in a digraph is taken with
respect to the underlying undirected graph.

A sequence $(v_0,\dots,v_n)$ of distinct vertices of a digraph $\vec G$ is called a {\em directed path} in $\vec G$ if for every positive $i\le n$ the ordered pair $(v_{i-1},v_i)$ is an edge of $G$. A \emph{directed cycle} is a sequence $(v_0,\dots,v_n)$ of distinct vertices with $(x_i,x_{i+1})$ being a directed edge for each residue $i$ modulo $n+1$.
A digraph $\vec G$ is {\em acyclic} if it contains no directed cycles. It
is well-known that each graph $G$ admits an acyclic orientation $\vec G$: take
any linear order $\le$ on the set $V_G$ of vertices and for any edge
$\{u,v\}\in E_G$ put $(u,v)\in E_{\vec G}$ if and only if $u<v$.

Following Rado's arrow notations, for a graph $G$ and a digraph $\vec H$ we
write $G\to \vec H$ if for every orientation $\vec G$ of $G$ there exists an injective
function $f:V_{\vec H}\to V_G$ such that an ordered pair $(u,v)$ of vertices of
$\vec H$ is
a directed edge in $\vec H$ if and only if $(f(u),f(v))$ is a directed edge in
$\vec G$. (Thus we require that $f$ induces an isomorphism of undirected graphs
and preserves all edge orientations.) If, moreover, $d_{\vec
H}(u,v)=d_G(f(u),f(v))$
for
every pair of vertices $u,v\in V_{\vec H}$, then we write $G\Ra \vec H$ and say
that
$f$ is an
\emph{isometric embedding} of $\vec H$ in $\vec G$. Since each graph $G$ admits
an
acyclic orientation, the arrow $G\to \vec H$ implies that the digraph $\vec H$
is acyclic.

Given a graph $G$ and a class $\vec{\mathcal H}$ of digraphs, we write $G\to
\vec{\mathcal H}$ (resp. $G\Ra \vec{\mathcal H}$) if  for every oriented graph
$\vec H\in\vec{\mathcal H}$ we have $G\to\vec H$ (resp. $G\Ra \vec H$). In this
case the family $\vec {\mathcal H}$ necessarily consists of acyclic digraphs.
For a natural number $n\in\IN$ by $\vec\Tau_n$ we denote the class of oriented
trees on $n$ vertices. By a {\em tree} we understand a connected graph without
cycles. For $n\in\IN$, the \emph{directed path} $\vec I_n$ is the digraph with
$V_{\vec I_n}=\{0,\dots,n-1\}$ and $E_{\vec I_n}=\{(i-1,i):0<i<n\}$.

For a class $\vec{\mathcal H}$ of digraphs let $\R(\vec{\mathcal H})$ (resp. $\IR(\vec{\mathcal H})$) be the smallest number of vertices of a graph $G$ such that $G\to\vec{\mathcal H}$ (resp. $G\Ra\vec{\mathcal H}$). If no graph $G$ with  $G\to\vec{\mathcal H}$ (resp. $G\Ra\vec{\mathcal H}$) exists, then we put $\R(\vec{\mathcal H})=\infty$ (resp. $\IR(\vec{\mathcal H})=\infty$). The number
$\R(\vec{\mathcal H})$ (resp. $\IR(\vec{\mathcal H})$) is called the ({\em isometric}) {\em Ramsey number} of the family $\vec{\mathcal H}$. If the family $\vH$ consists of a unique digraph $\vec H$, then we write $\R(\vec H)$ and $\IR(\vec H)$ instead of $\R(\{\vec H\})$ and $\IR(\{\vec H\})$, respectively.

By Theorem B of Cochand and Duchet \cite{CD}, for every finite acyclic digraph $\vec H$, the Ramsey number $\R(\vec H)$ is finite. This implies that for every finite family $\vec{\mathcal H}$ of finite acyclic digraphs the Ramsey number $\R(\vH)\le \sum_{\vec H\in\vH}\R(\vec H)$ is finite, too. In Section~\ref{s:Hoka} we shall apply a deep Ramsey result of Dellamonica and R\"odl \cite{DR} to prove that the isometric Ramsey number $\IR(\vH)$ is finite, too.

For the family $\vec\Tau_n$ of oriented trees on $n$ vertices Kohayakawa, \L
uczak and R\"odl \cite{KLR} proved that $\R(\vTau_n)=O(n^4\log n)$. In this
paper for every $n\in\IN$ we construct a graph $G_n$ with $<2^{2^{n-1}}$
vertices such that $G_n\Ra\vTau_n$, showing that $\IR(\vTau_n)<
2^{2^{n-1}}$. Using Bollob\'as' \cite{Bolobas78} bounds on the order of graphs
of large girth and large chromatic number, we shall improve the upper bounds
$\IR(\vec I_n)\le\IR(\vTau_n)< 2^{2^{n-1}}$ to $\IR(\vec I_n)=o(n^{2n})$ and
$\IR(\vTau_n)=o(n^{4n})$. In Theorem~\ref{t:Pikh} using random graphs we
improve the latter upper bound to $\IR(\vTau_n)\le (4e+o(1))^n(n^2\ln
n)^n=n^{2n+o(n)}$. The technique developed for the proof of
Theorem~\ref{t:Pikh} allows us to improve the upper bound $\R(\vTau_n)\le (2500
e^8+o(1))\,n^4\ln n$ obtained by Kohayakawa, \L uczak and R\"odl \cite{KLR} to
the upper bound $(K+o(1))\,n^4\ln n$,  where $K=\min_{x>1}\frac{16x^2}{1-x+x\ln
x}\approx 98.8249...$\ . In Section~\ref{s:chi} we search for long directed
paths in arbitrary orientations of graphs. In the final Section~\ref{s:infty}
we prove that every infinite graph $G$ admits an orientation containing no
directed path of infinite diameter in $G$. Some other results and problems
related to coloring and orientations in graphs can be found in \cite{Prot1}.

\section{The isometric Ramsey number for a finite acyclic digraph}\label{s:Hoka}

In this section we prove that each finite acyclic digraph $\vec H$ has finite isometric Ramsey number $\IR(\vec H)$. The idea of the proof of this result was suggested to the authors by Yoshiharu Kohayakawa.

\begin{theorem}\label{t:Koha} For any finite acyclic digraph $\vec H=(V,\vec E)$, the isometric Ramsey number $\IR(\vec H)$ is finite.
\end{theorem}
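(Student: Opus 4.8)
The plan is to reduce the statement to an edge-colouring Ramsey problem and then invoke the distance-preserving induced Ramsey theorem of Dellamonica and R\"odl \cite{DR}. First I would exploit acyclicity to put $\vec H$ into a normal form: fix a topological order $v_1\prec\dots\prec v_N$ of $\vec H=(V,\vec E)$ (with $N=|V|$), so that every directed edge $(v_i,v_j)\in\vec E$ has $i<j$. Thus $\vec H$ is exactly the ``forward'' orientation of its underlying undirected graph $H$ relative to $\prec$. On the host graph $G$ still to be constructed, fix once and for all a reference acyclic orientation induced by a linear order (such an orientation always exists, as recalled in the introduction). Any orientation $\vec G$ of $G$ then determines a $2$-colouring $c\colon E_G\to\{F,B\}$, where $c(e)=F$ iff $\vec G$ orients $e$ in agreement with the reference orientation. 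Under this dictionary, producing an isometric copy of $\vec H$ inside $\vec G$ becomes the task of producing a copy of $H$ in $G$ that is induced, isometric, and monochromatic, with the colour class read off against $\prec$ realising the forward pattern of $\vec H$.

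Secondly, I would apply the theorem of Dellamonica and R\"odl to the undirected graph $H$ with two colours, in the form: for every finite graph $H$ there is a finite graph $G$ such that in any $2$-colouring of $E_G$ some colour class contains an induced copy $H'\cong H$ of $H$ that is \emph{isometric} in $G$, i.e.\ $d_G(\phi(u),\phi(v))=d_H(u,v)$ for the isomorphism $\phi\colon H\to H'$ and all $u,v\in V$. Given an arbitrary orientation $\vec G$ of this $G$, the induced colouring $c$ yields such a monochromatic induced isometric copy $H'$. Since $H'$ is induced, non-edges of $H$ map to non-edges of $G$, so $\phi$ preserves the entire adjacency pattern; since $H'$ is monochromatic and we chose the reference orientation, all edges of $H'$ carry one consistent orientation, which, compared with the topological order, is precisely $\vec H$. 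Finally, isometry of $H'$ gives $d_G(\phi(u),\phi(v))=d_H(u,v)=d_{\vec H}(u,v)$ for all $u,v\in V$, which is exactly the condition $G\Ra\vec H$, whence $\IR(\vec H)\le |V_G|<\infty$.

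The step I expect to be the crux is isometry. An ordinary induced Ramsey theorem, or even the Cochand--Duchet theorem directly, already produces an induced directed copy of $\vec H$ in $\vec G$; but such a copy can fail to be isometric, because $G$ may contain \emph{shortcuts}, that is, short paths joining two copy-vertices while leaving the copy and thereby lowering their distance below $d_{\vec H}$. Ruling out all of these simultaneously is exactly what the distance-preserving theorem guarantees, and this is why that (deep) result, rather than classical induced Ramsey, is the tool invoked. A secondary technical point is to single out $\vec H$ itself rather than its reversal: a monochromatic $2$-colouring argument pins the oriented copy only up to the choice of colour class, so one works with the ordered (partite) version of the theorem, embedding the topological order $\prec$ of $\vec H$ into the reference order of $G$ so that the forward colour class realises $\vec H$ on the nose.

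Finally, the single-digraph case delivers the finite-family statement $\IR(\vH)<\infty$ of the introduction by applying the same argument to the disjoint union of the members of $\vH$, placing its connected components in distinct components of the host graph so that the infinite inter-component distances are respected by the isometric embedding.
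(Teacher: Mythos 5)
Your overall strategy --- encode an arbitrary orientation of $G$ as a $2$-colouring of $E_G$ against a fixed reference linear order, then invoke the distance-preserving induced Ramsey theorem of Dellamonica and R\"odl to obtain a monochromatic, monotone, isometric induced copy --- is exactly the paper's strategy, and your explanation of why isometry (shortcuts outside the copy) is the reason the deep theorem is needed is on target. However, there is a genuine gap at the point you dismiss as a ``secondary technical point.'' The Ramsey theorem only guarantees a monochromatic copy in \emph{some} colour class; if that class is the backward one, then every edge of the copy is oriented against the reference order, and the digraph you obtain is the \emph{reversal} of $\vec H$, not $\vec H$. Passing to the ordered (partite) version does not fix this: the embedding is monotone with respect to a fixed topological order of $\vec H$, so a backward-monochromatic copy still realises $\vec H$ with every arc reversed, and unless $\vec H$ happens to be isomorphic to its own converse your argument produces the wrong digraph. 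Saying that the forward colour class ``realises $\vec H$ on the nose'' only covers one of the two possible outcomes of the Ramsey step.

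The paper closes this gap with a doubling gadget: it applies the Dellamonica--R\"odl theorem not to $\vec H$ itself but to an acyclic connected digraph $\vec\Gamma$ on $V\times\{0,1\}$ containing an isometric copy of $\vec H$ on level $0$ and an isometric copy of the reversal of $\vec H$ on level $1$, the two levels being joined by the single arc $\big((h,0),(h,1)\big)$ for a chosen vertex $h$. Whichever colour the monochromatic monotone isometric copy of $\vec\Gamma$ receives, one of the two levels then yields an isometric copy of $\vec H$ with the correct orientation (green gives level $0$, red gives level $1$). To repair your write-up you need this or an equivalent device. A minor further point: your last paragraph overcomplicates the family case, since $G\Ra\vH$ only requires $G\Ra\vec H$ for each member separately, so a disjoint union of host graphs suffices and there is no need to embed the disjoint union of the members of $\vH$ isometrically or to worry about infinite inter-component distances.
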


\begin{proof} Clearly, it is enough to prove the theorem when the graph $\vec H$ is connected. Fix any vertex $h$ of $H$ and consider the digraph $\vec \Gamma$ with $$V_{\vec \Gamma}:=V_{\vec \Gamma}\times\{0,1\}\mbox{ \ and \ }\vec E_{\vec\Gamma}:=\big\{\big((h,0),(h,1)\big)\big\}\cup\big\{\big((u,0),(v,0)\big),\big((v,1),(u,1)\big):(u,v)\in E_{\vec H}\big\}.$$
Observe that the digraph $\vec\Gamma$ is acyclic, connected and contains isometric copies of $\vec H$ and the graph $\vec H$ with the opposite orientation.
Being acyclic, the graph $\vec \Gamma$ admits a linear ordering $<$ of vertices such that $u<v$ for any directed edge $(u,v)\in \vec E_{\vec \Gamma}$.

By Theorem 1.8 of \cite{DR}, there exists a finite graph $G$ with a linear
ordering of vertices such that for any 2-coloring of its edges there exists a
monotone isometric embedding $f:V_{\vec \Gamma}\to V_G$ such that the set
$\big\{\{f(u),f(v)\}:(u,v)\in E_{\vec \Gamma}\big\}$ is monochrome. In this case we
shall say that the embedding $f$ is \emph{monochrome}. The monotonicity of $f$
means that $f$ preserves the order of vertices.

We claim that $G\Ra \vec H$. Given any orientation $\vec G$ of the graph $G$, color an edge $\{u,v\}\in E_G$ with $u<v$ in green if $(u,v)\in E_{\vec G}$ and in red if $(v,u)\in E_{\vec G}$. By the Ramsey property of $G$, there exists a monochrome monotone isometric embedding $f:V_{\vec \Gamma}\to V_G$. If the color of the monochromatic set $C=\big\{\{f(u),f(v)\}:(u,v)\in E_{\vec\Gamma}\big\}$ is green, then the map $g_0:V_{\vec H}\to V_G$, $g_0:v\mapsto f(v,0)$, is a required isometric isomorphic embedding of $\vec H$ into $\vec G$. If the color of $C$ is red, then the map $g_1:V_{\vec H}\to V_G$, $g_1:v\mapsto f(v,1)$, is an isometric isomorphic embedding of $\vec H$ into $\vec G$. In both cases we get $G\Ra\vec H$.
\end{proof}

\begin{corollary} Any finite family $\vec{\mathcal H}$ of finite acyclic digraphs has finite isometric Ramsey number $\IR(\vec{\mathcal H})$.\qed
\end{corollary}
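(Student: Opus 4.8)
The plan is to mimic the subadditivity bound for the ordinary Ramsey number already recorded in the introduction, namely $\R(\vH)\le\sum_{\vec H\in\vH}\R(\vec H)$, but with the $\R$-witnesses replaced by the isometric witnesses now guaranteed by Theorem~\ref{t:Koha}. Writing $\vH=\{\vec H_1,\dots,\vec H_k\}$, Theorem~\ref{t:Koha} tells us each $\IR(\vec H_i)$ is finite, so I would fix for every $i$ a finite graph $G_i$ with $|V_{G_i}|=\IR(\vec H_i)$ and $G_i\Ra\vec H_i$, and then take $G$ to be the disjoint union $G_1\sqcup\dots\sqcup G_k$. The goal is to show $G\Ra\vH$, which immediately yields $\IR(\vH)\le|V_G|=\sum_{i=1}^k\IR(\vec H_i)<\infty$.

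To establish $G\Ra\vH$, I would fix an arbitrary orientation $\vec G$ of $G$ together with an index $i$, and produce an isometric copy of $\vec H_i$ in $\vec G$. Since no edge of $G$ joins two distinct components, $\vec G$ restricts to an orientation $\vec G_i$ of $G_i$; because $G_i\Ra\vec H_i$, there is an isometric isomorphic embedding $f\colon V_{\vec H_i}\to V_{G_i}$ into $\vec G_i$. Viewed as a map into $V_G$, this same $f$ still induces an isomorphism of $\vec H_i$ onto its image and preserves every edge orientation, since the edges incident to the image of $f$ are exactly the edges of $\vec G_i$.

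The one point that genuinely needs verification — and the step I would single out as the main (if minor) obstacle — is that $f$ remains isometric once the target is enlarged from $\vec G_i$ to all of $\vec G$. Here I would argue that for any $u,v\in V_{\vec H_i}$ the images $f(u),f(v)$ both lie in the single component $G_i$, and that any path in $G$ joining two vertices of $G_i$ cannot use vertices of another component, there being no edges between components; hence $d_G(f(u),f(v))=d_{G_i}(f(u),f(v))=d_{\vec H_i}(u,v)$, the last equality coming from $f$ being isometric into $\vec G_i$. This identity holds under the convention $\infty>n$ as well, so the case of a disconnected $\vec H_i$ (where $G_i$ itself may be disconnected) is covered automatically. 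Consequently $f$ is an isometric embedding of $\vec H_i$ into $\vec G$, and since $\vec G$ and $i$ were arbitrary we obtain $G\Ra\vec H_i$ for every $i$, that is, $G\Ra\vH$.

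In short, the entire argument reduces to the disjoint-union construction plus the observation that distances between two vertices of one component are unaffected by adjoining further components; everything else is routine once Theorem~\ref{t:Koha} supplies the individual witnesses.
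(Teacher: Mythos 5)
Your proof is correct and takes essentially the same route the paper intends: the corollary is left as an immediate consequence of Theorem~\ref{t:Koha} via the disjoint-union subadditivity bound $\IR(\vH)\le\sum_{\vec H\in\vH}\IR(\vec H)$, mirroring the bound for $\R(\vH)$ recorded in the introduction. The one point you single out --- that distances between two vertices of a component are unchanged by adjoining further components, so the embedding stays isometric in the whole graph --- is indeed the only thing to check, and you verify it correctly.
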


\begin{corollary} For every $n\in\IN$ the family $\vTau_n$ of directed trees on $n$ vertices has finite isometric Ramsey number $\IR(\vTau_n)$.\qed
\end{corollary}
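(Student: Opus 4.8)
The plan is to recognize $\vTau_n$ as a special instance of the family treated in the preceding corollary, so that the finiteness of $\IR(\vTau_n)$ follows with essentially no further work. Concretely, I would verify just two things: that $\vTau_n$ is a \emph{finite} family, and that every member of $\vTau_n$ is a \emph{finite acyclic} digraph.

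For the finiteness of the family, I would note that, up to isomorphism, there are only finitely many trees on $n$ vertices (for instance, there are at most $n^{n-2}$ labelled trees by Cayley's formula, hence finitely many isomorphism types). Each such tree has exactly $n-1$ edges and therefore admits at most $2^{n-1}$ orientations, so $\vTau_n$ is finite. To see that each $\vec H\in\vTau_n$ is acyclic, I would observe that the underlying undirected graph of $\vec H$ is a tree and hence contains no cycle at all; in particular it contains no directed cycle, so $\vec H$ is acyclic by definition. Each such $\vec H$ is moreover finite, having exactly $n$ vertices.

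Having recorded these two facts, I would simply apply the previous corollary to the family $\vTau_n$, obtaining $\IR(\vTau_n)\le\sum_{\vec H\in\vTau_n}\IR(\vec H)<\infty$. There is no genuine obstacle here; the only point requiring a moment's care is the bookkeeping that confirms $\vTau_n$ is finite, and even this reduces to the elementary bounds on the number of trees and their orientations indicated above.
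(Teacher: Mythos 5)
Your proposal is correct and matches the paper's intended argument: the paper states this corollary with no proof precisely because it is the immediate specialization of the preceding corollary to the finite family $\vTau_n$ of finite acyclic digraphs. Your verification that $\vTau_n$ is finite (finitely many trees on $n$ vertices, each with $n-1$ edges and hence at most $2^{n-1}$ orientations) and that every oriented tree is acyclic is exactly the routine bookkeeping the paper leaves implicit.
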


\begin{remark} The proof of \cite[Theorem 1.8]{DR} proceeds by a more general induction involving amalgamation and hypergraphs, and seems to give very bad bounds on the isometric Ramsey number $\IR(\vec\A_n)$ for the family $\vec\A_n$ of all acyclic digraphs on $n$ vertices. It would be interesting to get some reasonable upper bound on this function.
\end{remark}

\section{Simple bounds for the isometric Ramsey numbers $\IR(\vTau_n)$}

In this section we prove some simple upper bounds on the isometric Ramsey numbers $\IR(\vTau_n)$ and $\IR(\vec I_n)$. First we present a simple example of a graph witnessing that $\IR(\vTau_n)<2^{2^{n-1}}$.
The construction of this graph exploits  rectangular products of graphs.
By definition, the {\em rectangular product} $G\times H$ of two graphs $G,H$ is
a graph such that $V_{G\times H}=V_G\times V_H$  and an unordered pair
$\{(g,h),(g',h')\}\subset G\times H$ is an edge of $G\times H$ if and only if
either $\{g,g'\}\in E_G$ and $h=h'$ or $g=g'$ and $\{h,h'\}\in E_H$. It can be
shown that for any vertices $(g,h),(g',h')$ of $G\times H$ we get $$d_{G\times
H}\big((g,h),(g',h')\big)=d_G(g,g')+d_H(h,h').$$

For an (oriented) graph $G$ by $|G|$ we denote the cardinality of the set $V_G$ of vertices of $G$. For a cardinal number $m$ by $K_m$ we denote the complete graph on $m$ vertices.

\begin{lemma}\label{t1} Let $\vec {\Tau},\vec{\Tau}'$ be two families of finite oriented trees such that for every oriented tree $\vec T'\in\vec\Tau'$ there is an oriented subtree $\vec T\in\vec\Tau$ of $\vec T'$ such that $|\vec T|=|\vec T'|-1$. For any graph $G$ with $G\Ra\vec\Tau$ we get $G\times K_{|G|+1}\Ra \vec\Tau'$.
\end{lemma}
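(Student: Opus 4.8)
The plan is to reduce to a single oriented tree $\vec T'\in\vec\Tau'$ and to build an isometric copy of it inside an arbitrary orientation of $G\times K_{|G|+1}$, by placing a copy of the subtree $\vec T\in\vec\Tau$ in one ``layer'' $G\times\{k_0\}$ and hanging the missing vertex into a neighbouring layer. First I would note that, since $\vec T$ is a subtree of the tree $\vec T'$ with exactly one vertex fewer, the omitted vertex $w$ must be a leaf of $\vec T'$ (deleting an internal vertex would disconnect the tree); let $v\in V_{\vec T}$ be its unique neighbour, and suppose first that the edge is oriented $(v,w)$ in $\vec T'$. Because $w$ is a leaf off $v$, the path in $\vec T'$ between any two vertices of $\vec T$ avoids $w$, so distances among $\vec T$-vertices agree in $\vec T$ and $\vec T'$, and $d_{\vec T'}(w,u)=d_{\vec T}(v,u)+1$ for every $u\in V_{\vec T}$.

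Next I fix an arbitrary orientation of $G\times K_{|G|+1}$. For each vertex $k$ of $K_{|G|+1}$ the layer $G\times\{k\}$ carries a restricted orientation of $G$, so the hypothesis $G\Ra\vec\Tau$ (applied to $\vec T$) furnishes an isometric isomorphic embedding $f_k\colon V_{\vec T}\to V_G$ whose image $\{(f_k(u),k):u\in V_{\vec T}\}$ is an isometric copy of $\vec T$, since within one layer the product metric reduces to $d_G=d_{\vec T}$. The natural target for the leaf is a vertex $(f_{k_0}(v),k_1)$ sitting above $f_{k_0}(v)$ in some other layer $k_1\ne k_0$: using $d_{G\times H}=d_G+d_H$ together with $d_{K_{|G|+1}}(k_0,k_1)=1$, this vertex is adjacent to $(f_{k_0}(v),k_0)$ and lies at distance $d_{\vec T}(v,u)+1=d_{\vec T'}(w,u)$ from each $(f_{k_0}(u),k_0)$, while its distinct layer coordinate guarantees it is a new vertex creating no spurious edges to the copy of $\vec T$. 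Thus all the distance and non-adjacency requirements are automatically met for any choice of $k_0$ and $k_1\ne k_0$.

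The one genuine obstacle is the orientation of this connecting edge: I need $(f_{k_0}(v),k_0)$ to have an \emph{out}-neighbour inside the clique $\{f_{k_0}(v)\}\times K_{|G|+1}$, i.e. not to be a sink of that tournament, and this is exactly where the ``$+1$'' is spent. For each $g\in V_G$ the tournament on $\{g\}\times K_{|G|+1}$ has at most one sink, so the set $S$ of all such sinks (over every $g$) satisfies $|S|\le|G|$. The $|G|+1$ vertices $(f_k(v),k)$, as $k$ ranges over $V_{K_{|G|+1}}$, are pairwise distinct because their second coordinates differ, so by the pigeonhole principle at least one of them, say $(f_{k_0}(v),k_0)$, lies outside $S$; that vertex then has an out-neighbour $(f_{k_0}(v),k_1)$ in its clique, which I take as the image of $w$. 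Extending $f_{k_0}$ by $w\mapsto(f_{k_0}(v),k_1)$ gives the required isometric embedding of $\vec T'$, and the case where the edge is oriented $(w,v)$ is identical with ``sink'' replaced by ``source''. Since the orientation of $G\times K_{|G|+1}$ was arbitrary, this proves $G\times K_{|G|+1}\Ra\vec T'$ for every $\vec T'\in\vec\Tau'$, that is, $G\times K_{|G|+1}\Ra\vec\Tau'$.
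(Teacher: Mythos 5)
Your proof is correct, and at the top level it follows the same strategy as the paper's: reduce to a single $\vec T'$, observe the omitted vertex is a leaf, embed the smaller tree $\vec T$ isometrically into each of the $|G|+1$ layers $G\times\{k\}$, and attach the leaf along a vertical edge of a fiber clique $\{g\}\times K_{|G|+1}$, checking that the product metric makes the extension isometric. The one place you genuinely diverge is the final counting step. The paper applies the pigeonhole principle to the $|G|+1$ images $f_k(t)\in V_G$ to find two layers $u,w$ whose embeddings agree at the attachment vertex over the same $g$; it then looks at the orientation of the single edge between $(g,u)$ and $(g,w)$ and extends whichever of $f_u,f_w$ is compatible with it. You instead fix the desired direction of the pendant edge first and note that each fiber tournament $\{g\}\times K_{|G|+1}$ has at most one sink (resp.\ source), so among the $|G|+1$ distinct candidate attachment points $(f_k(v),k)$ at least one is a non-sink of its fiber and hence has the needed out-neighbour. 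Both arguments spend the ``$+1$'' in the same place and are equally elementary; yours has the small advantage of only ever extending a single embedding rather than keeping two in hand, while the paper's avoids any case split on the direction of the pendant edge until the very end.
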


\begin{proof} Let $G'=G\times K_{|G|+1}$. To prove that $G'\Ra \vec\Tau'$, take any oriented tree $\vec T'\in\vec\Tau'$ and any orientation $\vec G'$ of the graph $G'$. By our assumption, for the tree $\vec T'$ there exists an oriented subtree $\vec T\in\vec\Tau$ of $\vec T'$ such that $|\vec T|=|\vec T'|-1$. Let $t'$ be the unique element of the set $V_{\vec T'}\setminus V_{\vec T}$ and $t\in V_{\vec T}$ be the unique vertex of $\vec T$ such that $(t',t)$ or $(t,t')$ is an edge of $\vec T'$.

For every vertex $u$ of the complete graph $K_{|G|+1}$, consider the  subgraph $ G'_u=G'\times\{u\}$ of $G'$ and its orientation $\vec G'_u$, inherited from the orientation $\vec G'$ of  $G'$. Since $G\Ra \vec\Tau$, there is an isometric embedding $f_u:\vec T\to \vec G'_u$. By the Pigeonhole Principle, there are two distinct vertices $u,w$ in $K_{|G|+1}$ such that $f_u(t)=(g,u)$ and $f_w(t)=(g,w)$ for some vertex $g$ of the graph $G$. Now look at the orientation of the edges $\{t,t'\}$ and $\{(g,u),(g,w)\}$ in the digraphs $\vec T'$ and $\vec G'$.

If either $(t,t')\in E_{\vec T'}$ and $\big((g,u),(g,w)\big)\in E_{\vec G'}$ or $(t',t)\in E_{\vec T'}$ and $\big((g,w),(g,u)\big)\in E_{\vec G'}$, then we define a map $f:\vec T'\to G'$ by $f(t')=(g,w)$ and $f|\vec T=f_u$ and observe that $f$ is an isometric embedding of $\vec T'$ into $\vec G'$.

If either $(t,t')\in E_{\vec T'}$ and $\big((g,w),(g,u)\big)\in E_{\vec G'}$ or $(t',t)\in E_{\vec T'}$ and $\big((g,u),(g,w)\big)\in E_{\vec G'}$, then we define a map $f:\vec T'\to G'$ by $f(t')=(g,u)$ and $f|\vec T=f_w$ and observe that $f$ is an isometric embedding of $\vec T'$ into $\vec G'$.
\end{proof}

\begin{corollary}\label{c1} If for some $n\in\IN$ a graph $G$ satisfies the isometric Ramsey relation $G\Ra\vec\Tau_n$, then $G\times K_{|G|+1}\Ra\vec\Tau_{n+1}$.\qed
\end{corollary}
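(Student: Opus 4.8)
The plan is to obtain this corollary as an immediate special case of Lemma~\ref{t1}, by taking $\vec\Tau=\vTau_n$ and $\vec\Tau'=\vTau_{n+1}$. With these choices the hypothesis $G\Ra\vec\Tau$ of the lemma becomes precisely the assumption $G\Ra\vTau_n$ of the corollary, and the conclusion $G\times K_{|G|+1}\Ra\vec\Tau'$ becomes exactly the desired relation $G\times K_{|G|+1}\Ra\vTau_{n+1}$. So the entire argument reduces to verifying that the pair $(\vTau_n,\vTau_{n+1})$ satisfies the combinatorial hypothesis of Lemma~\ref{t1}.

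That hypothesis requires: for every oriented tree $\vec T'\in\vTau_{n+1}$ there is an oriented subtree $\vec T\in\vTau_n$ of $\vec T'$ with $|\vec T|=|\vec T'|-1=n$. To produce such a $\vec T$, I would invoke the standard fact that every tree on at least two vertices has a leaf, i.e.\ a vertex of degree one. Since $\vec T'$ is a tree on $n+1\ge 2$ vertices, I fix a leaf $v$ and let $\vec T$ be the subdigraph of $\vec T'$ induced on $V_{\vec T'}\setminus\{v\}$, carrying the orientation inherited from $\vec T'$. Deleting a leaf from a tree leaves a connected acyclic graph, so $\vec T$ is again an oriented tree, now on exactly $n$ vertices; as $\vTau_n$ consists of \emph{all} oriented trees on $n$ vertices, we have $\vec T\in\vTau_n$, and $\vec T$ is a subtree of $\vec T'$ of the required cardinality. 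This checks the hypothesis for an arbitrary $\vec T'\in\vTau_{n+1}$.

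There is essentially no obstacle here: the whole corollary is a one-line instantiation of Lemma~\ref{t1}, and the only thing to confirm is the elementary leaf-removal property of trees. Once the hypothesis is verified, Lemma~\ref{t1} applied to $\vec\Tau=\vTau_n$ and $\vec\Tau'=\vTau_{n+1}$ yields $G\times K_{|G|+1}\Ra\vTau_{n+1}$ directly.
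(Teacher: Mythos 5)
Your proof is correct and matches the paper's intended argument exactly: the corollary is stated with a \qed precisely because it is the instantiation of Lemma~\ref{t1} with $\vec\Tau=\vTau_n$ and $\vec\Tau'=\vTau_{n+1}$, the hypothesis being verified by the leaf-removal property of trees just as you describe.
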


\begin{theorem}\label{t2} For every $n\in\IN$ \ $\IR(\vec\Tau_{n+1})\le\De(\vec\Tau_n)(\De(\vec\Tau_n)+1)$ and $\De(\vec\Tau_n)< 2^{2^{n-1}}$.
\end{theorem}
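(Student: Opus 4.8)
The plan is to establish the two assertions separately: the recurrence is an immediate consequence of Corollary~\ref{c1}, while the doubly exponential bound then follows by a short induction on $n$ using that recurrence. For the recurrence I would let $G$ be a graph witnessing the isometric Ramsey number of $\vec\Tau_n$, so that $G\Ra\vec\Tau_n$ and $|G|=\De(\vec\Tau_n)$. Applying Corollary~\ref{c1} gives $G\times K_{|G|+1}\Ra\vec\Tau_{n+1}$, and since the rectangular product $G\times K_{|G|+1}$ has $|G|\cdot(|G|+1)=\De(\vec\Tau_n)\big(\De(\vec\Tau_n)+1\big)$ vertices, the minimality in the definition of the isometric Ramsey number yields $\IR(\vec\Tau_{n+1})\le\De(\vec\Tau_n)\big(\De(\vec\Tau_n)+1\big)$, which is exactly the first assertion.

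For the second assertion I would induct on $n$. In the base case $n=1$ the family $\vec\Tau_1$ consists of the single one-vertex tree, which embeds isometrically into any graph with at least one vertex, so $\De(\vec\Tau_1)=1<2=2^{2^{0}}$. For the inductive step, abbreviate $M=2^{2^{n-1}}$ and assume $\De(\vec\Tau_n)<M$; as $\De(\vec\Tau_n)$ is a positive integer this gives $\De(\vec\Tau_n)\le M-1$. Feeding this into the recurrence and using $M^2=2^{2^{n}}$ yields
\[
\De(\vec\Tau_{n+1})=\IR(\vec\Tau_{n+1})\le\De(\vec\Tau_n)\big(\De(\vec\Tau_n)+1\big)\le(M-1)M=2^{2^{n}}-2^{2^{n-1}}<2^{2^{n}},
\]
which is the desired bound with $n$ replaced by $n+1$.

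The single point demanding care is this last estimate: the recurrence produces the quadratic $a(a+1)$ rather than $a^2$, so one must verify that the extra linear term does not carry the product up to or past $2^{2^{n}}$. This is where the strict inequality in the inductive hypothesis does the work — using $\De(\vec\Tau_n)\le M-1$ in place of $\le M$ is precisely what makes $(M-1)M=M^2-M$ fall strictly below $M^2=2^{2^{n}}$, the subtracted term $M$ comfortably absorbing the linear contribution. I expect no genuine obstacle here, since squaring $M=2^{2^{n-1}}$ exactly doubles the exponent $2^{n-1}$ to $2^{n}$.
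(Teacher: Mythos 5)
Your proof is correct and takes essentially the same route as the paper: the recurrence is obtained from Corollary~\ref{c1} applied to a minimum-order witness $G$ with $G\Ra\vec\Tau_n$, and the bound $2^{2^{n-1}}$ then follows by induction. The only cosmetic difference is bookkeeping: the paper carries the inductive hypothesis in the form $\De(\vec\Tau_n)+1\le 2^{2^{n-1}}$, while you use the strict inequality together with integrality ($\De(\vec\Tau_n)\le 2^{2^{n-1}}-1$), which is an equivalent way of absorbing the linear term in $(M-1)M<M^2$.
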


\begin{proof} The inequality $\De(\vec\Tau_{n+1})\le \De(\vec\Tau^n)(\De(\vec\Tau^n)+1)$ follows from Corollary~\ref{c1}. Indeed, for every $n\in\w$ we can choose a graph $G$ with $|G|=\De(\vec \Tau^n)$ vertices and $G\Ra\vec\Tau_n$. By Corollary~\ref{c1}, the graph $G'=G\times K_{|G|+1}$ satisfies the relation $G'\Ra\vec\Tau_{n+1}$ and hence $$\De(\vec\Tau_{n+1})\le|G'|=|G|(|G|+1)=\De(\vec\Tau_n)(\De(\vec\Tau_n)+1).$$

It remains to prove that $\De(\vec\Tau_n)+1\le 2^{2^{n-1}}$ for $n\in\IN$. For $n=1$ we get the equality $\De(\vec\Tau_1)+1=1+1=2^{2^{0}}$. Assume that for some $n\in\IN$ we have proved that $\De(\vec\Tau_n)+1\le 2^{2^{n-1}}$. Then
$$\De(\vec\Tau_{n+1})+1\le\De(\vec\Tau_n)(\De(\vec\Tau_n)+1)+1\le (2^{2^{n-1}}-1)2^{2^{n-1}}+1=2^{2^n}-2^{2^{n-1}}+1\le 2^{2^{n}}.$$
\end{proof}

The upper bound $\IR(\vTau_n)< 2^{2^{n-1}}$ can be greatly improved using known upper bounds on the Erd\H os function $\Erdos(k,g)$, which assigns to any positive integer numbers $k,g$  the smallest cardinality $|G|$ of a graph $G$ with chromatic number $\chi(G)\ge k$ and girth $g(G)\ge g$. We recall that the  {\em girth} $g(G)$ of a graph is the smallest cardinality of a cycle in $G$. If $G$ contains no cycles, then we put $g(G)=\infty$. The {\em chromatic number} $\chi(G)$ of a graph $G$ is the smallest number $k\in\IN$ for which there exists a map $\chi:V_G\to\{1,\dots,k\}$ such that $\chi(x)\ne \chi(y)$ for any edge $\{x,y\}\in E_G$. The following  bounds for the Erd\H os function $\Erdos(k,g)$ were proved by Erd\H os \cite{Erdos}, Bollob\'as \cite{Bolobas78} and Spencer \cite{Spencer}, respectively.

\begin{proposition}
\begin{enumerate}
\item For any $k,g$ we get $\Erdos(k,g)\ge k^{(g-1)/2}$;
\item For any $k,g\ge 4$ we get $\Erdos(k,g)\le\lceil h^g\rceil$ where $h=6(k+1)\ln (k+1)$.
\item There exists a constant $C$ such that for any numbers $k,g\ge 3$ and
$m=\Erdos(k,g)$ we have the inequality $\sqrt[g-2]{m}\cdot \ln m<Ck$, which
implies that  $\Erdos(k,g)=o(k^{g-2})$ as $\max\{k,g\}\to\infty$.\qed
\end{enumerate}
\end{proposition}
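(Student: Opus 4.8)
The three bounds have independent proofs, so I would handle them one at a time. For the lower bound (1) the plan is to combine the degeneracy consequence of a large chromatic number with a Moore-type count. Any graph $G$ with $\chi(G)\ge k$ contains a subgraph $G_0$ of minimum degree at least $k-1$: repeatedly deleting a vertex of degree $<k-1$ cannot reach the empty graph, since a $(k-1)$-degenerate graph is $k$-colourable. This $G_0$ still has girth at least $g$, so fixing a vertex $v$ and growing the breadth-first tree rooted at $v$ out to radius $t=\lfloor(g-1)/2\rfloor$ produces no repeated vertices, because any coincidence would close a cycle of length at most $2t<g$. Counting the levels of this tree gives $|V_G|\ge|V_{G_0}|\ge 1+(k-1)\sum_{i=0}^{t-1}(k-2)^i$, whose leading term is of order $k^{(g-1)/2}$. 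I expect this direction to be routine; the only delicate point is that the clean Moore bound produces the base $k-2$ rather than $k$, so matching the stated exponent of $k$ exactly requires a slightly sharper accounting of the first two levels of the tree.

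For the upper bound (2) I would run Erd\H os's probabilistic deletion method in the sharp form due to Bollob\'as. Set $n=\lceil h^g\rceil$ and take $G=G(n,p)$ with $p$ just below $n^{1/g-1}$. Two bad events must be controlled simultaneously. First, the expected number of cycles of length at most $g-1$ is $\sum_{i=3}^{g-1}\frac{n(n-1)\cdots(n-i+1)}{2i}\,p^{i}=o(n)$ for this choice of $p$, so by Markov's inequality one may, with probability bounded away from $0$, delete a single vertex from each short cycle and still retain at least $n/2$ vertices; the surviving graph $G'$ then has girth at least $g$. Second, a first-moment estimate---crucially using the sharp bound $\binom{n}{a}\le(en/a)^a$ rather than $n^a$---shows that with high probability $\alpha(G)=O\!\big(p^{-1}\ln(np)\big)$, whence $\chi(G')\ge|V_{G'}|/\alpha(G')=\Omega\!\big(np/\ln(np)\big)=\Omega\!\big(h/\ln h\big)\ge k$. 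The main obstacle here is purely quantitative: the qualitative argument is classical, but keeping $\ln(np)$ in the independence bound (rather than the lossy $\ln n$, which would cost a spurious factor of $g$) and then tuning the implied constants so that the threshold lands exactly at $h=6(k+1)\ln(k+1)$ is where the real care is needed.

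For the refined bound (3) I would sharpen the same argument, optimising the exponent of $p$ against the length $g-1$ of the longest forbidden cycle rather than against $g$, and estimating the deletion cost of the short cycles more economically so as to gain two units in the exponent compared with the naive $h^g$ bound. Writing $m=\Erdos(k,g)$ and running the balance in reverse, one tracks how large a chromatic number can be forced on $m$ vertices under the girth constraint: the deletion step caps $np$ at roughly $m^{1/(g-2)}$ in scale, while the independence step converts this into a chromatic lower bound of order $m^{1/(g-2)}/\ln m$, and setting this above $k$ yields precisely an inequality of the shape $\sqrt[g-2]{m}\cdot\ln m<Ck$. This is essentially Spencer's analysis, and here too the difficulty lies entirely in the sharpness of the constant $C$ and in pinning the exponent at $g-2$ rather than in the structure of the argument. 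The final assertion $\Erdos(k,g)=o(k^{g-2})$ as $\max\{k,g\}\to\infty$ is then a formal consequence: the inequality rearranges to $m^{1/(g-2)}<Ck/\ln m$, and letting $\max\{k,g\}\to\infty$ forces the ratio $m/k^{g-2}$ to $0$.
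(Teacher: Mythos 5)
First, a point of reference: the paper does not prove this proposition at all --- it is stated with a \qed as a quotation of results of Erd\H os, Bollob\'as and Spencer, so your attempt can only be compared with the cited literature. For parts (2) and (3) your outline is the standard route (the deletion method, keeping the independence bound in terms of $\ln(np)$ rather than $\ln n$, and Spencer's rebalancing of the exponent against $g-2$); what is missing there is only the quantitative bookkeeping that you yourself flag, so I will not press on it.

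Part (1), however, contains a genuine gap that no ``slightly sharper accounting of the first two levels'' can repair. Your degeneracy-plus-Moore argument yields $|V_G|\ge 1+(k-1)\sum_{i=0}^{t-1}(k-2)^i$ with $t=\lfloor (g-1)/2\rfloor$, and this quantity is \emph{smaller} than $k^{t}$, not larger: already for $t=2$ it equals $1+(k-1)^2=k^2-2k+2<k^2$, and in general $(k-1)(k-2)^{t-1}/k^{t}\approx e^{-2t/k}$, so for fixed $k$ and growing $g$ the Moore count is only polynomial in $g$ (for $k=3$ it is $1+2t$, i.e.\ linear in $g$) while the target $k^{(g-1)/2}$ is exponential in $g$. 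The discrepancy is therefore not a constant to be recovered from the first two levels but an exponential gap. Worse, the inequality you are trying to prove is itself false in this regime: the odd cycle $C_{2\ell+1}$ has chromatic number $3$ and girth $2\ell+1$, so $\Erdos(3,g)\le g+1\ll 3^{(g-1)/2}$ for large $g$. So your method proves the correct statement $\Erdos(k,g)\ge 1+(k-1)\sum_{i=0}^{t-1}(k-2)^i$ (and hence the claimed bound up to a constant in the base, which suffices in the regime $k\to\infty$ where the paper actually uses the Erd\H os function), but it cannot, and should not be expected to, deliver the literal inequality $\Erdos(k,g)\ge k^{(g-1)/2}$ for all $k,g$; the clean fix is to state the Moore-type bound you actually obtain, or to restrict $k$ appropriately.
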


Write $G\rightharpoonup \vH$ if for every orientation $\vec G$ of $G$ and every
$\vec H\in \vH$ there is an injective map $f:V_{\vec H}\to V_G$ such that for
every
directed edge
$(x,y)$ of $H$ the pair $(f(x),f(y))$ is a directed edge of $\vec G$. (Note that
we do not require that $f$ induces isomorphism, that is, $G$ can have extra
edges inside the set $f(V_{\vec H})$.) 
Another function related to $\IR(\vH)$ is Burr's function $\Burr(\vH)$
assigning to every family $\vH$ of oriented trees the smallest number $k$ such
that $G\rightharpoonup \vH$ for every graph $G$ with chromatic number
$\chi(G)\ge k$. If such number $k$ does not exist, then we put
$\Burr(\vH)=\infty$.
By the Gallai-Hasse-Roy-Vitaver Theorem \cite[Theorem~3.13]{Spar},
the chromatic number $\chi(G)$ of a finite
graph $G$ is equal to $\max\{n\in\IN:G\rightharpoonup \vec
I_n\}$.
 This equality implies that $\Burr(\vec I_n)=n$ for every $n\in\IN$. In
\cite{Burr} Burr considered the numbers $\Burr(\vTau_n)$ and proved that
$\Burr(\vTau_n)\le (n-1)^2$. This upper bound was improved to the upper bound
$\Burr(\vTau_n)\le \frac12n^2-\frac12n+1$ in \cite{AHSRT}. According to (still
unproved) Conjecture of Burr \cite{Burr}, the equality $\Burr(\vTau_n)=2n-2$
holds for all $n\ge 2$.

\begin{proposition}\label{Erdo-Bur} For any $n\in\IN$ and a subclass $\vH\subset \vTau_n$ we get the upper bound
$$\IR(\vH)\le\Erdos(\Burr(\vH),2n-2).$$
\end{proposition}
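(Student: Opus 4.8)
The plan is to let $G$ be an extremal graph witnessing the value of the Erd\H{o}s function, that is, a graph with $|G|=\Erdos(\Burr(\vH),2n-2)$, chromatic number $\chi(G)\ge\Burr(\vH)$, and girth $g(G)\ge 2n-2$. (Such a graph exists by the definition of $\Erdos$; and since $\vH\subset\vTau_n$, Burr's function $\Burr(\vH)$ is finite, while if $\Erdos(\Burr(\vH),2n-2)=\infty$ the bound is vacuous.) It then suffices to prove $G\Ra\vH$, for this yields $\IR(\vH)\le|G|=\Erdos(\Burr(\vH),2n-2)$.

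Since $\chi(G)\ge\Burr(\vH)$, the definition of Burr's function gives $G\rightharpoonup\vH$. So I would fix an arbitrary orientation $\vec G$ of $G$ and any tree $\vec H\in\vH$; this produces an injective map $f:V_{\vec H}\to V_G$ carrying every directed edge $(x,y)$ of $\vec H$ to a directed edge $(f(x),f(y))$ of $\vec G$. The remaining and decisive task is to promote $f$ to an isometric isomorphic embedding by showing that $d_G(f(u),f(v))=d_{\vec H}(u,v)$ for all $u,v\in V_{\vec H}$. Distance preservation alone will suffice, since it forces the isomorphism property: if $u,v$ are non-adjacent in $\vec H$ then $d_{\vec H}(u,v)\ge 2$, whence $d_G(f(u),f(v))\ge2$ and $f(u),f(v)$ are non-adjacent in $G$, while orientation preservation is already built into $G\rightharpoonup\vH$.

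One inequality is immediate: as the underlying graph of $\vec H$ is a tree, there is a unique path joining $u$ to $v$, and its $f$-image is a walk of the same length in $G$, in fact a path since $f$ is injective, so $d_G(f(u),f(v))\le d_{\vec H}(u,v)$. For the reverse inequality I would argue by contradiction. Suppose $d_G(f(u),f(v))<d_{\vec H}(u,v)=:k$ for some pair $u,v$, and let $P$ be a shortest $G$-path from $f(u)$ to $f(v)$ and $Q$ the $f$-image of the tree path from $u$ to $v$. Then $P$ and $Q$ are distinct paths with the same endpoints (they have different lengths), so the symmetric difference of their edge sets is a nonempty subgraph of $G$ in which every vertex has even degree, and therefore contains a cycle. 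Since the underlying graph of $\vec H$ is a tree on $n$ vertices we have $k\le n-1$, so this cycle has length at most $|E(P)|+|E(Q)|\le(k-1)+k\le 2n-3<2n-2\le g(G)$, contradicting the girth bound.

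I expect the girth step to be the main obstacle: the crux is recognising that two competing $f(u)$--$f(v)$ paths of differing lengths must create a short cycle, and then checking that the resulting forbidden length, at most $2n-3$ by the tree diameter bound $k\le n-1$, is calibrated exactly against the girth requirement $g(G)\ge 2n-2$. Once this is in place, $f$ is isometric, and together with the inherited orientation preservation this gives $G\Ra\vH$, completing the proof.
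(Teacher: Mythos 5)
Your proof is correct and follows essentially the same route as the paper: take an extremal graph for $\Erdos(\Burr(\vH),2n-2)$, use Burr's function to obtain an orientation-preserving injection, and use the girth bound to upgrade it to an isometric embedding. The paper simply asserts this last step, whereas you spell out the symmetric-difference/short-cycle argument and the observation that distance preservation also forces non-edges to map to non-edges; both of these details are accurate.
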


\begin{proof} Fix a graph $G$ of cardinality $|G|=\Erdos(\Burr(\vH),2n-2)$ with
chromatic number $\chi(G)\ge \Burr(\vH)$ and girth $g(G)\ge 2n-2$. Let us prove
that
$G\Ra\vH$. Take any  orientation $\vec G$ of $G$ and $\vec H\in\vH$.
Since $G\rightharpoonup\vH$, there is an orientation-preserving injection
$f:\vec
H\to\vec G$. Since $\vec H$ is a connected graph with at most $n$ vertices and
$g(G)\ge 2n-2$, the map $f$ is an isometric embedding. So,
$G\Ra\vH$.
\end{proof}

Combining Proposition~\ref{Erdo-Bur} with known upper bounds $\Burr(\vec I_n)=n$ and $\Burr(\vTau_n)\le\frac12n^2-\frac12n+1$ we get the following upper bounds for the isometric Ramsey numbers $\IR(\vec I_n)$ and $\IR(\vTau_n)$.

\begin{corollary}\label{c:erdo-bound} For every $n\in\IN$ we get the upper bounds
$$
\begin{aligned}
\IR(\vec I_n)&\le \Erdos(n,2n-2)=o(n^{2n-4})=o(n^{2n})\mbox{ and}\\
\IR(\vTau_n)&\le\Erdos\big(\tfrac12n^2-\tfrac12n+1,2n-2\big)=o\big((\tfrac12n^2-\tfrac12n+1)^{2n-4}\big)=o(n^{4n}).
\end{aligned}
$$
\mbox{ }\qed
\end{corollary}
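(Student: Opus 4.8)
The plan is to combine Proposition~\ref{Erdo-Bur} with the two Burr numbers already recorded in the text and then feed the result into the asymptotic estimate $\Erdos(k,g)=o(k^{g-2})$ (as $\max\{k,g\}\to\infty$) supplied by part~(3) of the Proposition bounding the Erd\H os function. No new combinatorics is needed; the work is entirely in substituting values and simplifying asymptotics.

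First I would treat the directed path. Since $\vec I_n$ is an oriented tree on $n$ vertices, the singleton family $\{\vec I_n\}$ is a subclass of $\vTau_n$, so Proposition~\ref{Erdo-Bur} applies and gives $\IR(\vec I_n)\le\Erdos(\Burr(\vec I_n),2n-2)$. Substituting the Gallai--Hasse--Roy--Vitaver value $\Burr(\vec I_n)=n$ yields the exact inequality $\IR(\vec I_n)\le\Erdos(n,2n-2)$. Applying the asymptotic bound with $k=n$ and $g=2n-2$ (both tending to infinity, so $\max\{k,g\}\to\infty$) turns this into $\Erdos(n,2n-2)=o(n^{2n-4})$, and since $n^{2n-4}=o(n^{2n})$ trivially, the first claimed bound follows.

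For the whole family $\vTau_n$ I would argue in the same way, inserting one extra step. Proposition~\ref{Erdo-Bur} gives $\IR(\vTau_n)\le\Erdos(\Burr(\vTau_n),2n-2)$; here I would invoke the bound $\Burr(\vTau_n)\le\frac12n^2-\frac12n+1$ from \cite{AHSRT} together with the monotonicity of $\Erdos$ in its first argument. That monotonicity holds because any graph witnessing a larger chromatic number also witnesses every smaller one, so enlarging the required $k$ can only enlarge the minimum number of vertices; hence $\IR(\vTau_n)\le\Erdos\big(\tfrac12n^2-\tfrac12n+1,2n-2\big)$. The asymptotic estimate then yields the upper bound $o\big((\tfrac12n^2-\tfrac12n+1)^{2n-4}\big)$.

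The only point requiring any care is the final simplification of this expression to $o(n^{4n})$. I would note that $\frac12n^2-\frac12n+1<\frac12n^2$ for all $n\ge 3$, whence $(\tfrac12n^2-\tfrac12n+1)^{2n-4}<(\tfrac12n^2)^{2n-4}=2^{-(2n-4)}n^{4n-8}$, and the factor $n^{4n-8}$ already makes this $o(n^{4n})$. Thus the second claimed bound holds as well. The \textbf{main obstacle}, such as it is, is purely bookkeeping: keeping the little-$o$ chains correct through the monotonicity step and through raising a quadratic in $n$ to the power $2n-4$; once the exponent $4n-8$ is seen to be strictly below $4n$, the conclusion is immediate.
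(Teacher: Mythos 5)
Your proposal is correct and matches the paper's (one-line) argument exactly: the corollary is obtained by combining Proposition~\ref{Erdo-Bur} with $\Burr(\vec I_n)=n$ and $\Burr(\vTau_n)\le\frac12n^2-\frac12n+1$ and then invoking the Spencer-type estimate $\Erdos(k,g)=o(k^{g-2})$. Your explicit remarks on the monotonicity of $\Erdos$ in its first argument and on the exponent bookkeeping are sound and only make explicit what the paper leaves implicit.
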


In Theorem~\ref{t:Pikh} we shall improve the upper bound $o(n^{4n})$ for $\IR(\vTau_n)$ to the upper bound $n^{2n+o(n)}$.

\begin{remark} By Theorem 3 in \cite{KLR}, $\R(\vec I_n)\ge n^2/2$ for all $n\in\IN$. This yields the lower bound $$\tfrac12n^2\le \R(\vec I_n)\le\IR(\vec I_n)\le\IR(\vTau_n)$$
for the isometric Ramsey numbers $\IR(\vec I_n)$ and $\IR(\vTau_n)$.
\end{remark}

\begin{remark}
It can be shown that
$$
\begin{aligned}
&\IR(\vec I_1)=\De(\vec\Tau_1)=1=|K_1|,\\
&\IR(\vec I_2)=\De(\vec\Tau_2)=2=|K_2|,\\
&\IR(\vec I_3)=5=|C_5|,\;\De(\vec\Tau_3)=6=|K_2\times K_3|,\\
&\IR(\vec I_4)\le 30=|C_5\times K_6|,\;\De(\vec\Tau_4)\le 42=|K_2\times K_3\times K_{7}|.
\end{aligned}
$$
\end{remark}

\begin{question} What is the exact value of the isometric Ramsey numbers $\IR(\vec I_4)$ and $\IR(\vec\Tau_4)$? Are they distinct? \end{question}

\section{Isometric copies of directed trees in orientations of random graphs}\label{s:pikh}

In this section we shall apply the technique of random graphs and shall improve the upper bound $\IR(\vTau_n)=o(n^{4n})$ established in Corollary~\ref{c:erdo-bound} to the upper bound $\IR(\vTau_n)\le (4e+o(1))^n(n^2\ln n)^n=n^{2n+o(n)}$.

First we prove some technical lemmas. The first of them uses the idea of the proof of Theorem 1 in \cite{KLR}.

\begin{lemma}\label{l:Pikh} A graph $G=(V_G,E_G)$ satisfies $G\Ra\vTau_n$ for some $n\in\IN$ if there exist sequences $(w_k)_{k=1}^{n-1}$ and $(d_k)_{k=1}^{n-1}$ of positive real numbers such that for every  $2\le k<n$ the following conditions hold:
 \begin{enumerate}
 \item For every set $S=\{s_1,\dots,s_{k-1}\}\subset V_G$ of cardinality $k-1$ and every $v\in V_G\setminus S$, we have that
 $|Y|\le d_k$, where $Y$ consists of $y\in V_G\setminus(S\cup\{v\})$ such that $\{y,v\}\in E_G$ and $\dist_{G-v}(y,s_i)\le i$ for some $1\le i<k$.
 \item Every set $W\subset V_G$ of cardinality $|W|>w_k$ spans more than $(d_k+k-1)w_k$ edges in $G$.
   \item $\sum_{k=1}^{n-1}w_k<|V_G|$.
 \end{enumerate}
 \end{lemma}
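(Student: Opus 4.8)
The plan is to build an isometric embedding of a tree $\vec T \in \vec\Tau_n$ into an arbitrary orientation $\vec G$ of $G$ by a greedy vertex-by-vertex process, adding one leaf at a time, and use conditions (1)–(3) to guarantee that at each step a suitable vertex with the correct orientation is available. Since $\vec T$ is a tree on $n$ vertices, I would fix an ordering $t_1, \dots, t_n$ of its vertices such that each $t_k$ (for $k \ge 2$) is a leaf of the subtree induced by $t_1, \dots, t_k$; equivalently, each new vertex attaches to exactly one already-placed vertex (its parent). I would then construct the embedding $f : V_{\vec T} \to V_G$ inductively, placing $f(t_1)$ arbitrarily and, at step $k$, choosing $f(t_k)$ to be a neighbour of $f(t_{p(k)})$ (where $t_{p(k)}$ is the parent) with the orientation on $\{f(t_{p(k)}), f(t_k)\}$ matching the orientation of the edge between $t_{p(k)}$ and $t_k$ in $\vec T$.

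\emph{Maintaining isometry.} The delicate point is that a mere orientation-preserving injection need not be isometric; I must ensure that no unwanted short paths appear among the chosen images. Here is where condition (1) enters: writing $S = \{f(t_1), \dots, f(t_{k-1})\}$ and $v = f(t_{p(k)})$, the ``bad'' neighbours $y$ of $v$ are exactly those lying too close (within distance $i$) to some previously placed $s_i$ after deleting $v$; condition (1) bounds the number of such bad neighbours by $d_k$. A candidate for $f(t_k)$ must avoid these $d_k$ bad vertices, avoid the $k-1$ already-used vertices of $S$, and additionally have the correct edge orientation. So I would argue that the parent image $v$ needs degree strictly greater than $d_k + (k-1)$ among vertices of the correct orientation class for a valid choice to exist, and then show that the construction can be carried out provided the host graph is ``locally rich enough'' at each placed vertex.

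\emph{Forcing large degrees via conditions (2) and (3).} The role of conditions (2) and (3) is to guarantee, globally, that the greedy process never gets stuck. I would set this up by processing the construction over the whole vertex set: condition (3), $\sum_{k=1}^{n-1} w_k < |V_G|$, ensures there is enough room so that the set $W$ of ``usable'' vertices remaining at stage $k$ has cardinality exceeding $w_k$. Then condition (2) forces $W$ to span more than $(d_k + k - 1)\, w_k$ edges, so by averaging some vertex in $W$ has degree (within $W$) exceeding $d_k + k - 1$; combined with the orientation bookkeeping this produces a vertex with enough correctly-oriented neighbours to extend the embedding. The edge-density hypothesis is thus converted, by a counting/pigeonhole argument, into the existence of a high-degree vertex that serves as the next parent or next leaf image.

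\emph{Main obstacle.} The hardest part will be the careful synchronization of three simultaneous requirements at each step — orientation-matching, injectivity ($S$-avoidance), and the distance bound from condition (1) — while tracking how the ``forbidden'' and ``used'' sets grow with $k$, so that conditions (2) and (3) can be invoked with precisely the right parameters $w_k$ and $d_k$. In particular, one must verify that deleting the parent vertex $v$ (as in $\dist_{G-v}$) correctly captures the potential-shortcut structure, i.e.\ that avoiding the $d_k$ bad neighbours really does preserve \emph{all} pairwise distances in the growing image, not merely the distance to the parent. This requires an inductive invariant stating that the partial embedding $f$ restricted to $\{t_1, \dots, t_{k-1}\}$ is already isometric, and that adding $f(t_k)$ outside the bad set keeps it isometric; I would isolate this invariant as the crux of the argument and prove it by analyzing shortest paths through the newly added vertex.
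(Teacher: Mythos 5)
Your overall shape (extend the tree one pendant vertex at a time, use condition (1) to protect isometry, use conditions (2)--(3) to guarantee that the extension never gets stuck) matches the paper, and your discussion of condition (1) --- enumerating $S=\{s_1,\dots,s_{k-1}\}$ by distance to the attachment vertex $v$ and avoiding the $\le d_k$ vertices of $Y$ so that no shortcut through or around $v$ is created --- is essentially the paper's isometry argument. However, there is a genuine gap in how you deploy conditions (2) and (3). You propose a \emph{forward} greedy construction: place $f(t_1)$ first, then attach each $f(t_k)$ to its already-placed parent $f(t_{p(k)})$, and you justify the availability of a correctly oriented neighbour by an averaging argument: ``some vertex of $W$ has degree exceeding $d_k+k-1$.'' But the vertex you need to have many correctly oriented neighbours is the \emph{specific, already chosen} parent $f(t_{p(k)})$, not ``some vertex''; once the parent has been placed at an earlier stage, no amount of averaging over the remaining vertices can retroactively give it large out-degree (or in-degree, depending on the direction of the arc to $t_k$). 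As written, the greedy process can get stuck at a parent whose out-degree is too small, and conditions (2)--(3) are never brought to bear on that parent.

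The paper avoids this by inverting the induction. It proves the stronger statement that for every $U\subset V_G$ with $|U|>\sum_{i=1}^{k-1}w_i$ and every orientation, every $\vec T\in\vTau_k$ embeds $G$-isometrically \emph{inside $U$}. For the step $k\to k+1$ it deletes a pendant vertex $u$ of $\vec T$ with arc, say, $(v,u)$, lets $W$ be the set of vertices of $U$ whose out-degree in $G[U]$ is at most $d_k+k-1$, uses condition (2) to show $|W|\le w_k$ (if $|W|>w_k$ then $W$ would span more than $(d_k+k-1)w_k$ edges, yet each such edge is outgoing from some vertex of $W$, contradicting the degree bound), and then applies the induction hypothesis to embed $\vec T-u$ into $U\setminus W$. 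Thus the attachment vertex $v$ automatically lands on a vertex of out-degree greater than $d_k+k-1$ in $G[U]$, and after discarding the at most $d_k$ vertices of $Y$ and the $k-1$ vertices of $S$ a valid image for $u$ remains. Condition (3) is what keeps $U\setminus W$ large enough for the recursive call. To repair your argument you would need exactly this restructuring: the set of low-degree vertices must be excised \emph{before} the smaller tree is embedded, so that every placed vertex is a priori rich enough to receive its future children; the orientation of the pendant arc also has to be fixed before choosing which degree (out or in) defines $W$ at that level of the recursion.
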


\begin{proof} For a subset $U\subset V_G$ by $G[U]$ we denote the induced subgraph $G[U]=(U,E[U])$ of $G$, where $E[U]=\big\{\{u,v\}\in E_G:\{u,v\}\subset U\big\}$. Also, let us
write $(G,U)\Ra \vTau_k$, meaning that, for every $\vec T\in\vTau_k$,
every orientation $\vec G$ of $G$ contains a copy of $\vec T$ which lies inside $U$
and is an isometric subgraph of $G$.

We shall inductively prove that for every $1\le k\le n$ and every set $U\subset V_G$
of size $|U|>\sum_{i=1}^{k-1} w_i$, we have $(G,U)\Ra \vTau_k$.
 The base case $k=1$ is trivial. Suppose that this holds for some $k$.
Take any $U\subset V_G$ with $|U|>\sum_{i=1}^{k}w_i$. Take any orientation $\vec E(G[U])$ of $E(G[U])$ and any directed tree $\vec T\in \vTau_{k+1}$. Let $u$ be a pendant
vertex of $\vec T$. By symmetry, assume that $(v,u)$ is an arc in $\vec T$, that is, the arc in $\vec T$ goes from the unique neighbor $v$ of $u$ to $u$.

Let $W$ be the set of vertices in $U$ whose out-degree in $G[U]$ is at most $d_k+k-1$. We claim
that $|W|\le w_k$. Suppose not. Then $|W|>w_k$ and Item 2 guarantees that $W$ spans more than $(d_k+k-1)w_k$ edges in $G$, each edge contributing to out-degree
of some vertex in $W$. Thus $(d_k+k-1)|W|\ge |E[W]|>(d_k+k-1)w_k$, which is a desired contradiction showing that $|W|\le w_k$.

Thus $U'=U\setminus W$ has size $|U'|=|U|-|W|> \big(\sum_{i=1}^{k}w_i\big)-w_k=\sum_{i=1}^{k-1}w_i$. By inductive assumption, $U'$ has a $G$-isometric copy $\vec T'$ of the oriented tree $\vec T-u$. Let $\{s_1,\dots,s_{k-1}\}$ be an enumeration of the set $S:=V_{\vec T'}\setminus\{v\}\subset U'$ such that $\dist(s_i,v)\le i$ for every $i<k$.  Let $Y$ be defined as in Item~1 with respect to $v$ and $\{s_1,\dots,s_{k-1}\}$.
By Item 1, $|Y|\le d_k$. On the other hand, the neighbor
$v\in V_{\vec T'}\subset U\setminus W$ of $u$ must have out-degree in $U\setminus S$ greater than $d_k+k-1-|S|=d_k$.
Thus there is an out-neighbor of $v$
which is in $U\setminus(W\cup Y)$. Let $u$ be mapped to this vertex. Then $(v,u)\in \vec E(G[U])$ is
oriented from $v$ to $u$, as desired. Since $d_{G-v}(u,s_i)>i$ for each $i<k$, the addition of $u$ cannot violate the $G$-isometry property
(since all vertices of $\vec T-u$ are embedded into $S\cup\{v\}$). This gives the required embedding of $\vec T$ and finishes the proof.
\end{proof}

Our next elementary lemma yields an upper bound on the sum of a geometric progression.

\begin{lemma}\label{l:geom} For positive real numbers $a,c$ with $a>1+\frac1c$ we get $\frac{a^n-1}{a-1}<(1+c)a^{n-1}$ for every $n\in\IN$.
\end{lemma}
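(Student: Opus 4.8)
The plan is to recognize the left-hand side as a finite geometric series and bound it by the corresponding infinite series, where the hypothesis $a>1+\frac1c$ turns out to be exactly what is needed to close the estimate. First I would note that $a>1+\frac1c>1$, so $a-1>0$ and the expression $\frac{a^n-1}{a-1}$ is the partial sum
\[
\frac{a^n-1}{a-1}=\sum_{j=0}^{n-1}a^j=a^{n-1}\sum_{i=0}^{n-1}a^{-i}.
\]
The factor $a^{n-1}$ matches the target right-hand side $(1+c)a^{n-1}$, so it suffices to prove that $\sum_{i=0}^{n-1}a^{-i}<1+c$.

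The key step is to bound the partial sum $\sum_{i=0}^{n-1}a^{-i}$ by the infinite geometric series with ratio $1/a<1$. Since every omitted term $a^{-i}$ (for $i\ge n$) is strictly positive, the partial sum is strictly smaller than the infinite sum:
\[
\sum_{i=0}^{n-1}a^{-i}<\sum_{i=0}^{\infty}a^{-i}=\frac{1}{1-1/a}=\frac{a}{a-1}=1+\frac{1}{a-1}.
\]
Finally I would invoke the hypothesis: from $a>1+\frac1c$ we get $a-1>\frac1c$, hence $\frac{1}{a-1}<c$, which yields $1+\frac{1}{a-1}<1+c$ and completes the chain of inequalities. Multiplying back by $a^{n-1}$ gives the claim. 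The case $n=1$ is covered automatically, since the partial sum equals $1<1+c$.

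There is no substantial obstacle here; this is an elementary estimate, and the only thing worth pinning down is the strictness of the first bound (which holds because at least the term $a^{-n}$ is dropped and is positive) and the direction of the inequality in the last step (which is precisely where $a>1+\frac1c$ rather than $a\ge 1+\frac1c$ is used, giving the strict conclusion). As an alternative, one can avoid the infinite series entirely: after clearing the positive denominator $a-1$, the inequality is equivalent to $c\,a^{n}-(1+c)a^{n-1}+1>0$, which factors as $a^{n-1}\bigl(ca-(1+c)\bigr)+1>0$, and $ca-(1+c)>0$ follows directly from $a>1+\frac1c$. Either route is short, so I would present the geometric-series version as it explains the shape of the bound and the role of the constant $c$.
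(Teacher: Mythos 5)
Your proof is correct. Your primary route---recognizing $\frac{a^n-1}{a-1}$ as the geometric partial sum $a^{n-1}\sum_{i=0}^{n-1}a^{-i}$ and bounding it by the infinite series $\frac{a}{a-1}=1+\frac{1}{a-1}<1+c$---is a slightly different packaging from the paper's, which simply clears the positive denominator and reduces the claim through two equivalences to $1<c(a-1)$. The two arguments bottom out in the same inequality ($\frac{1}{a-1}<c$ versus $1<c(a-1)$), so the difference is presentational rather than substantive: your version explains where the constant $1+c$ comes from (it is an upper bound for the sum of the infinite geometric series with ratio $1/a$), while the paper's is marginally shorter and avoids any mention of infinite sums. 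Your ``alternative'' at the end---clearing the denominator to get $ca^n-(1+c)a^{n-1}+1>0$ and factoring---is essentially the paper's own proof, so you have in effect given both. All the strictness and sign checks you flag (positivity of $a-1$, the dropped positive terms, and the strict hypothesis $a>1+\frac1c$) are handled correctly.
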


\begin{proof} The inequality is equivalent to $a^n-1<(1+c)a^{n-1}(a-1)=a^n-a^{n-1}+ca^{n-1}(a-1)$ and to $a^{n-1}-1<ca^{n-1}(a-1)$. The latter inequality follows from $a^{n-1}<ca^{n-1}(a-1)$, which is equivalent to $1<c(a-1)$.
\end{proof}

In the proof of Lemma~\ref{l:random} we shall use the following Chernoff-type
bounds;
for a proof see e.g.~\cite[\S A.1]{AS}.

\begin{lemma}[Chernoff bounds] Let $X_1,\dots,X_n$ be independent random variables taking values in $\{0,1\}$ and let $\mathbb E X$ be the expected value of their sum $X=\sum_{i=1}^nX_i$. Then $$
\mathbb P\big\{X\ge C\cdot\mathbb EX\big\}\le \big(\tfrac{e^{C-1}}{C^C}\big)^{\mathbb E X},\;\;\mathbb P\big\{X\ge(1+c)\mathbb EX\big\}\le e^{-\frac{c^2}{3}\mathbb E X}\mbox{ \ and \ \ }\mathbb P\big\{X\le(1-c)\mathbb EX\big\}\le e^{-\frac{c^2}2\mathbb E X}$$for every $C>1$ and $0<c<1$.\qed
\end{lemma}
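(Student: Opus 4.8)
The statement is the classical Chernoff--Hoeffding inequality, and the natural route is the exponential moment method. The plan is to treat each of the three inequalities by bounding, for a suitable real parameter $t$, the moment generating function. By independence it factorises as $\mathbb E\,e^{tX}=\prod_{i=1}^n\mathbb E\,e^{tX_i}$, and writing $p_i=\mathbb P\{X_i=1\}$ and $\mu=\mathbb E X=\sum_{i=1}^n p_i$, each factor obeys $\mathbb E\,e^{tX_i}=1-p_i+p_ie^t\le\exp\big(p_i(e^t-1)\big)$ by the elementary bound $1+x\le e^x$. Multiplying these out gives the single clean estimate $\mathbb E\,e^{tX}\le\exp\big(\mu(e^t-1)\big)$, valid for every real $t$, which is the only probabilistic input needed.

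For the first inequality I would take $t>0$ and combine this with Markov's inequality applied to the nonnegative variable $e^{tX}$:
$$\mathbb P\{X\ge C\mu\}=\mathbb P\{e^{tX}\ge e^{tC\mu}\}\le e^{-tC\mu}\,\mathbb E\,e^{tX}\le\exp\big(\mu(e^t-1-tC)\big).$$
Since $C>1$, the exponent is minimised over $t>0$ at $t=\ln C$, and substituting gives $\exp\big(\mu(C-1-C\ln C)\big)=\big(e^{C-1}/C^C\big)^{\mu}$, which is exactly the asserted bound.

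The second inequality is the special case $C=1+c$ of the first, so it reduces to the one-variable inequality $(1+c)\ln(1+c)-c\ge c^2/3$ for $0<c<1$; this I would verify by comparing Taylor expansions (or by noting the difference vanishes at $c=0$ and has nonnegative derivative on $(0,1)$). The third inequality is handled symmetrically by taking a negative parameter $-s$ with $s>0$: the same moment estimate yields $\mathbb P\{X\le(1-c)\mu\}\le\exp\big(\mu(e^{-s}-1+s(1-c))\big)$, which is minimised at $e^{-s}=1-c$, reducing the claim to the scalar inequality $-(1-c)\ln(1-c)-c\le -c^2/2$ for $0<c<1$.

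The only genuine work lies in these two one-variable estimates, which turn the sharp but unwieldy exponents $C-1-C\ln C$ and $e^{-s}-1+s(1-c)$ into the clean quadratic bounds $-c^2/3$ and $-c^2/2$; this is precisely the step where the constants $1/3$ and $1/2$ and the restriction $0<c<1$ enter. The factorisation of the moment generating function, the Bernoulli bound $1+x\le e^x$, and the Markov step are all immediate, so I expect the calculus lemmas to be the sole point requiring care.
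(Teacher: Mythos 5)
Your argument is correct and is exactly the standard exponential-moment (Cram\'er--Chernoff) derivation; the paper itself gives no proof of this lemma, deferring to Alon and Spencer \cite[\S A.1]{AS}, where the same method (factorised moment generating function, Markov's inequality at $t=\ln C$ resp.\ $e^{-s}=1-c$, and the two scalar estimates $(1+c)\ln(1+c)-c\ge c^2/3$ and $(1-c)\ln(1-c)+c\ge c^2/2$) is carried out. Both calculus facts do hold on $(0,1)$ --- the first because its derivative $\ln(1+c)-2c/3$ is nonnegative there, the second by the series $-\ln(1-c)\ge c$ --- so your proposal is complete and consistent with the cited source.
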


\begin{lemma}\label{l:random} For positive integers $n,N$ the inequality $\IR(\vec\Tau_n)\le N$ holds if there exist real numbers  $c,p\in(0,1)$, $C\in(1,\infty)$ satisfying the following inequalities:
\begin{enumerate}
\item $c^2pN>3\ln(3N)$;
\item $(1-C+C\ln C)p(1+c)^n(pN)^{n-2}>(n-1)\ln N+\ln(1+c)+\ln(3)$;
\item $c^2C^2(1+c)^{2n}(pN)^{2n-4}>N\ln 2+\ln(3n)$;
\item $\frac{(n-1)(n-2)}{(1-c)p}+
\frac{2C}{(1-c)}(n-1)(1+c)^{n}(pN)^{n-2}<N$.
\end{enumerate}
\end{lemma}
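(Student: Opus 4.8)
The plan is to establish the bound by the probabilistic method: I will produce a graph on $N$ vertices meeting the three hypotheses of Lemma~\ref{l:Pikh}, and conclude $\IR(\vTau_n)\le N$. Let $G=G(N,p)$ be the binomial random graph on $\{1,\dots,N\}$, each edge present independently with probability $p$, and look for the two sequences required by Lemma~\ref{l:Pikh} among the explicit candidates
\[
d_k:=Cp(1+c)^n(pN)^{k-1},\qquad w_k:=\frac{2(d_k+k-1)}{(1-c)p}\qquad(2\le k<n),
\]
which are exactly what the four inequalities suggest. With this choice, inequality~(4) is literally the deterministic condition $\sum_{k=1}^{n-1}w_k<N$, i.e.\ Item~(3) of Lemma~\ref{l:Pikh}, after the estimate $\sum_{k}(pN)^{k-1}\le(n-1)(pN)^{n-2}$. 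It then remains to show that $G$ satisfies Items~(1) and~(2) with positive probability; I will split the failure into three bad events and bound each by $1/3$, which accounts for the constant $3$ appearing in inequalities~(1)--(3).

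I would first control degrees. Let $\mathcal D$ be the event that every vertex of $G$ has degree at most $(1+c)pN$; the upper Chernoff bound applied to each degree together with a union bound over the $N$ vertices gives $\mathbb P[\mathcal D^c]<1/3$ precisely when inequality~(1) holds. The core of the argument is Item~(1) of Lemma~\ref{l:Pikh}: the bound $|Y|\le d_k$ for every $v$, every $S=\{s_1,\dots,s_{k-1}\}$, and every $k$. The difficulty is that membership of a vertex $y$ in $Y$ depends both on the edge $\{y,v\}$ and on the distance from $y$ to $S$ in $G-v$. I would decouple these by conditioning on all edges of $G-v$. This freezes the ``close set'' $Z:=\{y:\dist_{G-v}(y,s_i)\le i\text{ for some }i<k\}$, and on $\mathcal D$ the maximum degree of $G-v$ is at most $D:=(1+c)pN$, so from $|B_i(s)|\le\sum_{j=0}^{i}D^{j}$ and two applications of Lemma~\ref{l:geom} (needing $D>1+\tfrac1c$, which the other inequalities force) one gets the deterministic bound $|Z|\le M_k:=(1+c)^n(pN)^{k-1}$. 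Conditionally $Y=N(v)\cap Z$ is then stochastically dominated by $\mathrm{Bin}(M_k,p)$, whose mean is $pM_k=d_k/C$, and the upper Chernoff bound yields $\mathbb P[|Y|\ge d_k]\le(e^{C-1}/C^{C})^{pM_k}$. A union bound over the $N$ choices of $v$, the at most $N^{k-1}$ choices of $S$, and the values of $k$ pushes the total below $1/3$ exactly under inequality~(2), the dominant term being $k=n-1$.

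It remains to verify Item~(2) of Lemma~\ref{l:Pikh}. Since the number of edges inside a fixed set of $m$ vertices is $\mathrm{Bin}\big(\binom m2,p\big)$, the definition of $w_k$ is arranged so that for $|W|>w_k$ the target $(d_k+k-1)|W|$ lies below $(1-c)$ times the expected edge count; the lower Chernoff bound then controls the probability that a given large set spans too few edges, and a union bound over all $2^N$ subsets (producing the $N\ln2$ term) and over the $n$ values of $k$ keeps the total below $1/3$ under inequality~(3). With all three bad events controlled, their union has probability below $1$, so some realization $G$ of $G(N,p)$ simultaneously satisfies Items~(1)--(3) of Lemma~\ref{l:Pikh} with the sequences above; that lemma gives $G\Ra\vTau_n$ and hence $\IR(\vTau_n)\le N$.

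I expect the estimate on $Y$ in the second paragraph to be the main obstacle. The delicate points are, first, separating the two sources of randomness in $Y$ in a way compatible with the global event $\mathcal D$, so that the comparison with $\mathrm{Bin}(M_k,p)$ is legitimate, and second, collapsing the raw ball sum $\sum_{j\le i}D^{j}$ into the clean form $(1+c)^n(pN)^{k-1}$ via Lemma~\ref{l:geom}. Matching the resulting powers of $(1+c)$ and $(1-c)$ across inequalities~(2), (3) and~(4) is where the bookkeeping is heaviest, and it is the part I would check most carefully.
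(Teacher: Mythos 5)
Your overall architecture matches the paper's proof exactly (random graph $G(N,p)$, three bad events each of probability $<\tfrac13$ feeding into Lemma~\ref{l:Pikh}, condition (4) handling the deterministic sum $\sum_k w_k<N$), but your choice of the sequence $d_k$ contains a genuine gap. You set $d_k:=Cp(1+c)^n(pN)^{k-1}$, i.e.\ you let the threshold track your ball-size bound $M_k=(1+c)^n(pN)^{k-1}$ at each level $k$. Then the Chernoff exponent for the event $|Y|\ge d_k$ is $pM_k=p(1+c)^n(pN)^{k-1}$, which \emph{shrinks geometrically} as $k$ decreases (condition (1) forces $pN>3$), while the entropy cost $k\ln N$ of the union bound over $v$ and $S$ shrinks only linearly. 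Hence the binding case of your union bound is the \emph{smallest} $k$, not $k=n-1$ as you claim, and condition (2) --- which only supplies $(C\ln C-C+1)\,p(1+c)^n(pN)^{n-2}>(n-1)\ln N+\dots$, i.e.\ the $k=n-1$ requirement --- does not imply the analogous inequality $(C\ln C-C+1)\,p(1+c)^n(pN)^{k-1}>k\ln N+\dots$ for small $k$. Indeed, in the intended application (Theorem~\ref{t:Pikh}, where $pN\asymp n^2\ln n$ and $N\asymp e^n(1+c)^n(pN)^{n-2}n$) one computes $(C\ln C-C+1)\,p(1+c)^n(pN)\asymp (pN)^{4-n}\to 0$, while $2\ln N\asymp 4n\ln n$, so your $k=2$ term in the union bound is of order $N^2$ and the argument collapses.

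The fix is what the paper does: take $d_k:=Cp\hbar$ with $\hbar:=(1+c)^n(pN)^{n-2}$ \emph{uniformly in} $k$. Since $M_k\le(1+c)^{k+1}(pN)^{k-1}\le\hbar$ for all $k<n$, one still has $Y\preceq\mathrm{Bin}(\hbar,p)$, so the Chernoff exponent is $p\hbar$ at every level, and the total bad probability is at most $\sum_{k=1}^{n-1}N^k\bigl(e^{C-1}/C^C\bigr)^{p\hbar}<(1+c)N^{n-1}\bigl(e^{C-1}/C^C\bigr)^{p\hbar}$, which is exactly what condition (2) controls (this is also where the $\ln(1+c)$ term on its right-hand side comes from, via Lemma~\ref{l:geom} applied to $\sum_k N^k$). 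Note that condition (4) is calibrated to this uniform choice: $\sum_{k=1}^{n-1}w_k=\frac{(n-1)(n-2)}{(1-c)p}+\frac{2C}{1-c}(n-1)\hbar$ is then an identity rather than the overestimate you invoke. The rest of your outline --- the degree event under condition (1), the conditioning on $G-v$ to decouple $Z$ from the edges at $v$, and the $2^N$ union bound for the edge-density condition under condition (3) --- agrees with the paper and is sound.
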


\begin{proof} Assume that the numbers $n,N,p,c,C$ satisfy the assumptions of the lemma. Let $G=G(N,p)$ be a random graph on $N$ vertices in which an edge $\{u,v\}\subset V_G$ appears with probability $p$. We shall prove that with non-zero probability the random graph $G$ has $G\Ra\vec\Tau_n$.

Let $$\hbar:=(1+c)^{n}(pN)^{n-2}.$$
For every positive integer $k<n$ let $$d_k=Cp\hbar\mbox{ \ and \ }w_k= \dfrac{2(d_k+k-1)}{(1-c)p}.$$

Chernoff bound implies that any fixed vertex of $G$ has degree $\ge(1+c)p(N-1)$ with probability $<e^{-\frac{c^2}{3}p(N-1)}$. Consequently, with probability $P_1>1-Ne^{-\frac{c^2}3p(N-1)}$ all vertices of $G$ have degree $<(1+c)pN$. The condition (1) implies that $-\frac{c^2}3p(N-1)<-\ln(3N)$ and hence $$P_1>1-Ne^{-\frac{c^2}3p(N-1)}>1-Ne^{-\ln(3N)}=\tfrac23.$$

For every $k<n$, take any pairwise distinct points $v,s_1,\dots,s_{k-1}\in V_G$. If
the maximum degree of $G$ is at most $(1+c)pN$, then for every $i<k$ the ball $B(s_i,i)=\{x\in V_G:\dist_G(x,s_i)\le i\}$ has cardinality $$|B(s_i,i)|\le \sum_{j=0}^i\big((1+c)pN\big)^j=\frac{\big((1+c)pN\big)^{i+1}-1}{(1+c)pN-1}<
(1+c)\big((1+c)pN\big)^{i}.$$
The latter strict inequality can be derived from Lemma~\ref{l:geom} and the inequality $cpN\ge c^2pN>3\ln(3N)\ge 3$.

By above, the set $X$ of  vertices of $G-v$ at distance at most $i<k$ in $G-v$ from some $s_i$ has size at most $(1+c)\sum_{i=1}^{k-1}\big((1+c)pN\big)^{i}=(1+c)\frac{((1+c)pN)^k-1}{(1+c)pN-1}<
(1+c)^{k+1}(pN)^{k-1}\le \hbar$.

Consider the set $Y$ of neighbors of $v$ that fall into the set $X$.
 The definition of $X$ does not depend on the edges incident to $v$, so conditioned on $X$ (of size at most $\hbar$) the size of $Y$ is dominated
by $Y'\sim Bin(\hbar,p)$. Chernoff bound shows that the probability that $Y'$ is at least $Cp\hbar=C\E Y'$ is
at most $\big(\frac{e^{C-1}}{C^C}\big)^{p\hbar}$. Since the number of possible choices of $v,s_1,\dots,s_{k-1}$ is equal to $\frac{N!}{(N-k)!}\le N^k$, with probability
$$P_2\ge 1-\sum_{k=1}^{n-1}N^k\big(\tfrac{e^{C-1}}{C^C}\big)^{p\hbar}=1-
\big(\tfrac{e^{C-1}}{C^C}\big)^{p\hbar}\frac{N^n-1}{N-1}>
1-(1+c)N^{n-1}\big(\tfrac{e^{C-1}}{C^C}\big)^{p\hbar}$$ the condition (1) of Lemma~\ref{l:Pikh} is satisfied or we have a vertex of degree $\ge (1+c)pN$.
We claim that $P_2>\frac23$. It suffices to prove that
$$\ln(1+c)+(n-1)\ln N+p\hbar(C-1-C\ln C)<-\ln(3).$$ But this follows from condition (2).
\smallskip

Next, we prove that with probability $>\frac23$ the condition (2) of Lemma~\ref{l:Pikh} holds. Take any positive $k<n$ and put $\bar w_k=\min\{m\in\IN:w_k<m\}$. For any fixed set $W\subset V_G$ of cardinality $|W|=\bar w_k$, the number of edges it spans is $Bin({\bar w_k\choose 2},p)$. By Chernoff bound, the probability that it is less than $(1-c)p{\bar w_k\choose 2}$ is less that $e^{-\frac12c^2p{\bar w_k\choose 2}}$. The probability $P_{3,k}$ that some set $W\subset V_G$ of cardinality $|W|=\bar w_k$ spans less than $(1-c)p{\bar w_k\choose 2}$ edges is $P_{3,k}<{N\choose\bar w_k}e^{-\frac12c^2p{\bar w_k\choose 2}}<2^Ne^{-\frac14c^2p\bar w_k(\bar w_k+1)}$. We claim that $P_{3,k}<\frac1{3n}$ which will follow as soon as we show that
$N\ln 2-\frac14c^2p\bar w_k(\bar w_k+1))<-\ln(3n)$. For this it suffices to check that $\frac14c^2p\bar w_k(\bar w_k+1)>N\ln 2+\ln(3n)$.

This follows from the chain of the inequalities
$$
\begin{aligned}
\tfrac14c^2\bar w_k(\bar w_k+1)>\tfrac14c^2w_k^2>c^2C^2\hbar^2=
c^2C^2(1+c)^{2n}(pN)^{2n-4}>N\ln 2+\ln(3n),
\end{aligned}
$$
the last inequality postulated in (3).
Therefore, $P_{3,k}<\frac1{3n}$ and the probability $P_3$ that for every $k<n$ every set $W\subset V[G]$ of cardinality $|W|>w_k$ spans at least$$(1-c)p{\bar w_k\choose 2}>(1-c)pw_k(w_k+1)/2=(d_k+k-1)(w_k+1)>(d_k+k-1)w_k$$ edges is $>1-\sum_{k=1}^{n-1}P_{3,k}>1-\frac{n-1}{3n}>\frac23$. So, with probability $>\frac23$ the condition (2) of Lemma~\ref{l:Pikh} holds.

Since $(1-P_1)+(1-P_2)+(1-P_3)<1$, there is a non-zero probability that the random graph $G=G(N,p)$ satisfies the conditions (1) and (2) of Lemma~\ref{l:Pikh}.

It remains to show that the condition (3) of Lemma~\ref{l:Pikh} holds, too. For this observe that
$$
\begin{aligned}
\sum_{k=1}^{n-1}w_k&=\sum_{k=1}^{n-1}\frac{2(Cp\hbar+k-1)}{(1-c)p}=
\frac{2}{(1-c)p}\sum_{k=1}^{n-1}(k-1)+
\frac{2C}{1-c}(n-1)\hbar=
\\
&= \frac{(n-1)(n-2)}{(1-c)p}+
\frac{2C}{1-c}(n-1)(1+c)^{n}(pN)^{n-2}<N.
\end{aligned}
$$The last inequality follows from the condition (4) of the Lemma.
\smallskip

Now it is legal to apply Lemma~\ref{l:Pikh} and conclude that $G\Ra\vec\Tau_n$ and hence $\IR(\vec\Tau_n)\le|G|=N$.
\end{proof}

Now we are able to prove the promised upper bound $\IR(\vTau_n)\le (4e+o(1))^n(n^2\ln n)^{n}=n^{2n+o(n)}$.

\begin{theorem}\label{t:Pikh} For every $\e\in(0,1)$ there is $n_\e\in\IN$ such that $\IR(\vTau_n)\le \big(4e(1+\e)\,n^2\ln n\big)^n$ for all $n\ge n_\e$.
\end{theorem}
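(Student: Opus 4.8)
The plan is to deduce the theorem directly from Lemma~\ref{l:random}: it suffices to show that for every $\varepsilon\in(0,1)$ there is $n_\varepsilon$ such that for all $n\ge n_\varepsilon$, with $N=\lceil(4e(1+\varepsilon)\,n^2\ln n)^n\rceil$, one can choose parameters $c,p\in(0,1)$ and $C\in(1,\infty)$ satisfying the four inequalities (1)--(4). All the combinatorics has already been packaged into Lemmas~\ref{l:Pikh} and~\ref{l:random}, so the entire argument reduces to a single existence-of-parameters computation, after which Lemma~\ref{l:random} gives $\IR(\vTau_n)\le N$, which is the claimed bound.

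For the parameter choice I would keep the ``expected degree'' $pN$ at the scale dictated by the target, namely $pN=\Theta(n^2\ln n)$, equivalently $p=\Theta(n^2\ln n/N)$. This is the right scale: the quantity $\hbar=(1+c)^n(pN)^{n-2}$ controls both $\sum_k w_k$ in condition (4) and the Chernoff union bound in condition (2), and since $N^{1/n}=4e(1+\varepsilon)\,n^2\ln n$ one needs $(pN)^{n-2}$ to sit just below $N$, forcing $pN$ to be of order $n^2\ln n$. I would then take $c=c_n\to0$ slowly, so that $\tfrac{1+c}{1-c}\to1$ while the factor $(1+c)^n$ stays subexponential in the final exponent, and choose the Chernoff parameter $C>1$ so as to optimize the trade-off between conditions (2) and (4). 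Minimizing the relevant expression, of the shape $\frac{C}{1-C+C\ln C}$, against the geometric factors (whose sums I would control through Lemma~\ref{l:geom}) is exactly what produces the leading constant $4e$, while the spare factor $(1+\varepsilon)$ is there to absorb the lower-order $(1+c)^n$ and $(\ln n)^n$ contributions as $n\to\infty$.

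Concretely I would dispatch the four conditions as follows. Conditions (1) and (3) are \emph{soft}: with $pN\sim n^2\ln n$ one has $c^2pN\sim c^2n^2\ln n$, which dwarfs $3\ln(3N)\sim 6n\ln n$, and $c^2C^2\hbar^2$ is of order $N^2$ up to a polynomial factor, hence dwarfs $N\ln2$; both therefore hold with an exponential margin for all large $n$. The real work is conditions (2) and (4), and their simultaneous satisfiability is the main obstacle. Condition (2) demands $(1-C+C\ln C)\,p\hbar>(n-1)\ln N+O(1)\sim 2n^2\ln n$, i.e.\ that $p\hbar$ be \emph{large}, whereas condition (4) demands $\sum_k w_k<N$, i.e.\ that $\hbar$, and hence $p\hbar$, be \emph{small}; these pull $pN$ in opposite directions. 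The crux is to verify that the choice $pN\approx 4e(1+\varepsilon)\,n^2\ln n$ leaves a nonempty window, keeping both margins strictly positive for every $n\ge n_\varepsilon$. This is a matter of careful asymptotic bookkeeping---propagating the $(1+c)^n$, the $(\ln n)^n$, and the factor $\ln N\sim 2n\ln n$ through (2) and (4)---and it is precisely here that the exact constant and the role of $\varepsilon$ are pinned down. Once the window for $pN$ is shown to be nonempty, Lemma~\ref{l:random} yields $\IR(\vTau_n)\le N=(4e(1+\varepsilon)\,n^2\ln n)^n$, completing the proof.
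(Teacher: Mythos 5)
Your overall strategy coincides with the paper's: everything reduces to exhibiting $c,p\in(0,1)$ and $C>1$ satisfying conditions (1)--(4) of Lemma~\ref{l:random} with $N\approx(4e(1+\e)\,n^2\ln n)^n$, and your triage of the conditions (that (1) and (3) are soft, and that the tension between (2) and (4) is the real content) is correct. But the one step that carries all the content --- the actual choice of $C$ and of $pN$ --- is missing, and what you do say about it would not work. You propose to take $C$ a fixed constant obtained by ``minimizing'' an expression of the shape $\tfrac{C}{1-C+C\ln C}$; that function has no minimizer on $(1,\infty)$ (it decreases to $0$), and, more importantly, for \emph{any} fixed $C$ condition (2) reads $(1-C+C\ln C)\,p\hbar>(n-1)\ln N\sim 2n^2\ln n\cdot n/n$ while condition (4) forces $\hbar\lesssim N/(2C(n-1))$, hence $p\hbar\lesssim pN/(2C(n-1))$; combining the two pushes you to $pN=\Omega(n^3\ln n)$ and yields only $\IR(\vTau_n)\le n^{3n+o(n)}$. (The fixed-$C$ optimization you describe is the mechanism behind Theorem~\ref{t:KLR}, where the target is $O(n^4\ln n)$ vertices; it is not what happens here.) The paper's choice is $C=e^n$: then $1-C+C\ln C=1+(n-1)e^n$, the factor $(n-1)$ on the left of (2) cancels the $(n-1)$ in $(n-1)\ln N$ on the right, and one can take $pN=4(1+\delta)\,n^2\ln n$; the price is the factor $C=e^n$ in $\sum_k w_k\approx\frac{2C}{1-c}(n-1)\hbar$ in condition (4), and this is exactly where the $e$ in $N^{1/n}\approx 4e\,n^2\ln n$ comes from. (Writing $C=e^{\alpha n}$, conditions (2) and (4) together force $N^{1/n}\gtrsim\frac{e^{\alpha}}{\alpha}\,4n^2\ln n$, and $e^{\alpha}/\alpha$ is minimized at $\alpha=1$.)

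A second, related slip: you set $pN\approx 4e(1+\e)\,n^2\ln n$, i.e.\ $pN\approx N^{1/n}$. In fact $pN$ must be $4(1+o(1))\,n^2\ln n$, a factor $e$ \emph{smaller} than $N^{1/n}$: with your value of $pN$ and any $C$ large enough for condition (2), condition (4) would force $N\gtrsim C\,(pN)^{n-2}$ with $C\ge e^{cn}$ for some $c>0$, giving $N^{1/n}>4e(1+\e)\,n^2\ln n$ --- so the ``nonempty window'' you defer to asymptotic bookkeeping is actually empty for the parameters you name. The argument is salvaged exactly by the paper's choices ($pN=4(1+\delta)\,n^2\ln n$, $C=e^n$, and $c,\delta$ small fixed constants with $(1+\delta)(1+c)<1+\e$), but identifying that $C$ must grow like $e^n$ is the key idea of the proof, and it is absent from your proposal.
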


\begin{proof} Choose any positive  $\delta,c\in(0,1)$ such that $$(1+\delta)(1+c)<1+\e\mbox{ \ \ and \ \  }4(1+\delta)\,\frac{1-c}{2+c}>2+\delta.$$
For every $n\in\IN$ let $N$ be the smallest integer number, which is greater than
$$\tfrac{(2+c)e^n}{1-c}(n-1)(1+c)^n\big(4(1+\delta)\,n^2\ln n\big)^{n-2}$$ and let $$p:=\frac{4(1+\delta)\,n^2\ln n}N.$$
So, $N>\frac{(2+c)e^n}{1-c}(n-1)(1+c)^n(pN)^{n-2}\ge N-1$.
 It is easy to see that $$N=o\big((4e(1+\e)\,n^2\ln n)^n\big)$$ and for $C=e^n$ the conditions (1),(3),(4) of Lemma~\ref{l:random} hold for all sufficiently large $n$. To verify the condition (2), observe that
 $$
\begin{aligned}
&(1-C+C\ln C)p(1+c)^n(pN)^{n-2}\ge(1-e^n+e^n\ln e^n)p\frac{(N-1)(1-c)}{(2+c)e^n(n-1)}=\\
&\frac{1+e^n(n-1)}{e^n(n-1)}\frac{1-c}{2+c}\frac{N-1}{N}\,pN=
\Big(1+\frac1{e^n(n-1)}\Big)\frac{N-1}{N}\frac{1-c}{2+c}\,4(1+\delta)n^2\ln n>\\
&>\Big(1+\frac1{e^n(n-1)}\Big)\frac{N-1}{N}(2+\delta)\,n^2\ln n=(2+\delta+o(1))\,n^2\ln n.
\end{aligned}
$$
On the other hand, $(n-1)\ln N+\ln(1+c)+\ln 3=(2+o(1))\,n^2\ln n$. So, the condition (2) holds for large $n$. Applying Lemma~\ref{l:random}, we conclude that $$\IR(\vec\Tau_n)\le N\le (4e(1+\e)\,n^2\ln n)^n$$ for all sufficiently large $n$.
\end{proof}

By Corollary~\ref{c:erdo-bound} and Theorem~\ref{t:Pikh}, $\IR(\vec I_n)=o(n^{2n})$ and $\IR(\vTau_n)\le n^{2n+o(n)}$.

\begin{question} What is the growth rate of the sequence $\De(\vec\Tau_n)$?  Is $\De(\vec\Tau_n)=n^{o(n)}$? 
\end{question}

The technique developed for the proof of Theorem~\ref{t:Pikh} allows us to
improve the upper bound $$\R(\vTau_n)\le\big(4(5e^2)^4+o(1)\big)\,n^4\ln n,$$
obtained by Kohayakawa, \L uczak and R\"odl in (the proof of) Theorem 1 of
\cite{KLR}, and replace the constant $4(5e^2)^4=2500 e^8\approx 7452395.96...$
by the a much smaller constant $K\approx 98.82...$~. 

\begin{theorem}\label{t:KLR} Let $K:=\min_{x>1}\frac{16x^2}{1-x+x\ln x}\approx 98.8249...$ For any positive $\e>0$ there exists $n_\e\in\IN$ such that $\R(\vTau_n)<(K+\e)\,n^4\ln n$ for all $n\ge n_\e$. Consequently, $\R(\vTau_n)<99\,n^4\ln n$ for all sufficiently large $n$.
\end{theorem}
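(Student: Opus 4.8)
The plan is to adapt the random-graph argument of Theorem~\ref{t:Pikh}, relaxing the isometry requirement to that of an orientation-preserving \emph{induced} embedding --- which is exactly the relation $G\to\vTau_n$ defining $\R(\vTau_n)$. Dropping distance-preservation drastically shrinks the set of out-neighbours of the current vertex $v$ that we must forbid: instead of all vertices lying in the balls $B(s_i,i)$ around the already-embedded vertices $s_1,\dots,s_{k-1}$ (a set whose size is exponential in $n$), we need only avoid those neighbours of $v$ that are adjacent to some $s_i$, a set of expected size $\Theta(n\,p^2N)$, merely linear in $n$. This one change replaces the parameter $\hbar=(1+c)^n(pN)^{n-2}$ of Lemma~\ref{l:random} by a quantity linear in $n$ and collapses the bound from $n^{2n+o(n)}$ to $n^4\ln n$.

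First I would establish the analogue of Lemma~\ref{l:Pikh}: $G\to\vTau_n$ holds whenever there are positive reals $(w_k)_{k=1}^{n-1}$ and $(d_k)_{k=1}^{n-1}$ such that for every $2\le k<n$ one has (1$'$) for every $S=\{s_1,\dots,s_{k-1}\}\subset V_G$ and every $v\in V_G\setminus S$ the set $Y$ of $y\in V_G\setminus(S\cup\{v\})$ with $\{y,v\}\in E_G$ and $\{y,s_i\}\in E_G$ for some $i<k$ has $|Y|\le d_k$; (2$'$) every $W\subset V_G$ with $|W|>w_k$ spans more than $(d_k+k-1)w_k$ edges; and (3$'$) $\sum_{k=1}^{n-1}w_k<|V_G|$. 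The induction copies that of Lemma~\ref{l:Pikh}: when the pendant vertex $u$ with in-tree neighbour $v$ is attached (say the arc of $\vec T$ runs from $v$ to $u$; the other case is symmetric), its image is chosen among the out-neighbours of $v$ lying outside $S\cup Y$. As $v$ avoids the low-out-degree set $W$, it has more than $d_k$ out-neighbours in $V_G\setminus S$, and since $|Y|\le d_k$ such an image exists; avoiding $Y$ makes $f(u)$ adjacent to no previously placed vertex except $v$, so the embedding stays induced and orientation-preserving.

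Next I would prove the analogue of Lemma~\ref{l:random} for $G=G(N,p)$, running the same four Chernoff estimates but with $\hbar:=(n-1)(1+c)pN$ in place of $(1+c)^n(pN)^{n-2}$. If the maximum degree is below $(1+c)pN$, then for fixed $v,s_1,\dots,s_{k-1}$ the set of vertices adjacent to some $s_i$ has size at most $(k-1)(1+c)pN\le\hbar$, so conditionally $|Y|$ is stochastically dominated by $\mathrm{Bin}(\hbar,p)$; taking $d_k=Cp\hbar$ and $w_k=\tfrac{2(d_k+k-1)}{(1-c)p}$ (both uniform in $k$) reduces (1$'$)--(3$'$), exactly as in Lemma~\ref{l:random}, to the four inequalities (1)--(4) of Lemma~\ref{l:random} with this new $\hbar$. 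I would then fix $C=x>1$ (to be optimised at the very end) and a small constant $c>0$, put $p=\tfrac{1-c}{2Cn^2}$, and read off the value of $N$ from the binding inequality below.

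The crux, and the main obstacle, is the bookkeeping that extracts the constant. The value $p=\tfrac{1-c}{2Cn^2}$ is forced, up to the factor $(1-c)$, by the summation condition (4): since $d_k=Cp\hbar$ is uniform in $k$ one has $\sum_{k=1}^{n-1}w_k\approx\tfrac{2Cn^2pN}{1-c}$, which falls below $N$ precisely when $p<\tfrac{1-c}{2Cn^2}$, and it is here that the factor $2$, hence the factor $4$ in $p^2N=\tfrac{(1-c)^2N}{4C^2n^4}$, enters. The decisive constraint is the analogue of inequality (2), namely $(1-C+C\ln C)\,p\hbar>(n-1)\ln N+O(\ln n)$ with $p\hbar=(n-1)(1+c)p^2N$. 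Substituting the value of $p^2N$, dividing by $n-1$, and using $\ln N=(4+o(1))\ln n$ turns this into $N>\tfrac{16C^2}{1-C+C\ln C}\,n^4\ln n\,(1+o(1))$; minimising the coefficient over $C=x>1$ gives exactly $K=\min_{x>1}\tfrac{16x^2}{1-x+x\ln x}\approx98.82$, so $N=\lceil(K+\e)n^4\ln n\rceil$ satisfies (2) for large $n$. It then remains only to check that inequalities (1) and (3) hold with room to spare --- they demand merely $N=\Omega(n^2\ln n)$ --- after which the analogue of Lemma~\ref{l:random} yields $G\to\vTau_n$ and hence $\R(\vTau_n)\le N<(K+\e)\,n^4\ln n$ for all large $n$; in particular $\R(\vTau_n)<99\,n^4\ln n$.
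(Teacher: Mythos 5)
Your proposal is correct and follows essentially the same route as the paper: the paper likewise relaxes condition (1) of Lemma~\ref{l:Pikh} to $d_{G-v}(y,s_i)\le 1$, redefines $\hbar:=(1+c)(n-2)pN$, keeps $d_k=Cp\hbar$ and $w_k=\frac{2(d_k+k-1)}{(1-c)p}$, takes $p=\Theta(n^{-2})$ and $N\approx K(1+\delta)n^4\ln n$, and optimises $C$ at the minimiser of $16x^2/(1-x+x\ln x)$, with condition (2) being the binding constraint exactly as you identify. The only slip is a dropped factor of $1+c$: with your $p=\frac{1-c}{2Cn^2}$ one gets $\sum_k w_k\approx(1+c)N>N$, so condition (4) fails as written; the paper takes $p=\frac{1-c}{2C(1+c)^2n^2}$, and this correction is harmlessly absorbed into $K+\e$ by letting $c\to0$.
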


\begin{proof} We indicate which changes should be made in the proof of Theorem~\ref{t:Pikh} to obtain Theorem~\ref{t:KLR}.
\smallskip

In the condition (1) of Lemma~\ref{l:Pikh} the inequality $d_{G-v}(y,s_i)\le i$ should be replaced by $d_{G-v}(y,s_i)\le 1$.

In the proof of Lemma~\ref{l:random} the constant $\hbar$ should be redefined as $\hbar:=(1+c)(n-2)pN$ and the conditions (1)--(4) of Lemma~\ref{l:random} should be changed to the conditions:
\begin{itemize}
\item[(1')] $c^2pN>3\ln(3N)$;
\item[(2')] $(1-C+C\ln C)(1+c)(n-2)p^2N>(n-1)\ln N+\ln(1+c)+\ln(3)$;
\item[(3')] $(cC(1+c)(n-2)pN)^2>N\ln 2+\ln(3n)$;
\item[(4')] $\frac{n(n-1)}{(1-c)p}+\frac{2C(1+c)}{(1-c)}(n-1)(n-2)pN<N$.
\end{itemize}

Now we are able to prove Theorem~\ref{t:KLR}. Let $C\approx 4.92155...$  be the unique real number in $(1,\infty)$ such that
$$\frac{16C^2}{1-C+C\ln C}=K:=\min_{x>1}\frac{16x^2}{1-x+x\ln x}\approx 98.8249...\footnote{The approximate values of $C$ and $K$ were found by the online WolframAlpha computational knowledge engine at www.wolframalpha.com}$$
Given any $\e>0$, choose real numbers $\delta,c\in (0,1)$  such that $K\delta<\e$ and
$$
4(1+\delta)\frac{(1-c)^2}{(1+c)^3}>4+\delta.
$$

For every $n\in\IN$ let $p:=\frac{1-c}{2C(1+c)^2n^2}$ and let $N$ be the smallest integer, which is greater than $K(1+\delta)n^4\ln n$. It is easy to see that $N=o\big((K+\e)\,n^4\ln n\big)$ and the conditions (1'), (3') and (4') are satisfied for all sufficiently large $n$. To see that (2') holds, observe that
$$
\begin{aligned}
&(1-C+C\ln C)(1+c)(n-2)p^2N\ge \frac{(1-C+C\ln C)(1+c)(1-c)^2}{(2C(1+c)^2n^2)^2}(n-2)K(1+\delta)\,n^4\ln n=\\
&=\frac{1-C+C\ln C}{C^2}\frac{(1-c)^2}{4(1+c)^3}(n-2)K(1+\delta)\ln n=(1+\delta)K\frac{16}{K}\frac{(1-c)^2}{4(1+c)^3}(n-2)\ln n>\\
&>(4+\delta)(n-2)\ln n=(4+\delta+o(1))\,n\ln n.
\end{aligned}
$$
On the other hand,
$$(n-1)\ln N+\ln(1+c)+\ln 3\le (n-1)\ln(1+K(1+\delta)\,n^4\ln n)+\ln(1+c)+\ln 3=(4+o(1))\,n\ln n,$$
so for large $n$ the condition $(2')$ is satisfied, too.

Applying the modified version of Lemma~\ref{l:random}, we get $$\R(\vTau_n)\le N\le (K+\e)\,n^4\ln n$$ for all sufficiently large numbers $n$.
\end{proof}

\section{Long directed paths in orientations of a graph}\label{s:chi}

By the Gallai-Hasse-Roy-Vitaver Theorem \cite[Theorem~3.13]{Spar}, each finite
graph $G$ has chromatic number $$\chi(G)=\max\{n\in\IN:G\rightharpoonup \vec
I_n\},$$where the symbol $G\rightharpoonup \vec
I_n$ means that each orientaion of $G$ contains a simple directed path of length $n$.  Having in mind this characterization, for every graph $G$ consider the
numbers
$$\ddiam_I(G)=\sup\{n\in\IN:G\Ra\vec I_n\},\;\ddiam_T(G)=\sup\{n\in\IN:G\Ra\vec\Tau_n\},\;$$
and observe that $\ddiam_T(G)\le\ddiam_I(G)\le\chi(G)$ and $$\ddiam_I(G)\le\sup\{\diam(G')+1:\mbox{$G'$ is a connected component of $G$}\}.$$
Observe that $\IR(\vec I_n)$ (resp. $\IR(\vTau_n)$) is equal to the smallest cardinality $|G|$ of a graph $G$ with $\ddiam_I(G)\ge n$ (resp. $\ddiam_T(G)\ge n$). So, the characteristics $\ddiam_I$ and $\ddiam_T$ determine the isometric Ramsey numbers $\IR(\vec I_n)$ and $\IR(\vTau_n)$.
\smallskip

We shall show that a graph $G$ has $\ddiam_I(G)\le 2$ if and only if $G$ is a comparability graph.
We recall that a graph $G$ is called a {\em comparability graph} if $G$ admits
a \emph{transitive} orientation $\vec G$ (that is, for any directed edges $(x,y)$ and $(y,z)$ of $\vec G$ the
pair $(x,z)$ is a directed edge of $\vec G$); equivalently, the set $V_G$ of vertices of $G$ admits
a partial order such that a pair $\{u,v\}$ of distinct vertices of $G$ is an
edge of $G$ if and only if $u$ and $v$ are comparable in the partial order. By
the results of Ghouila-Houri and of Gilmore and Hoffman (see
\cite[Theorem~6.1.1]{BS}), comparability graphs can be characterized as graphs
$G$ whose every cycle of odd length has a triangular chord (more precisely, for
every $(2n+3)$-cycle on $(v_0,\dots, v_{2n+2})$ with $n\ge
1$, there is
a residue $i$ modulo $2n+3$ such that
$\{v_{i},v_{i+2}\}\in E_G$). More information on comparability graphs can be
found in Chapter 6 of the survey \cite{BS}.

\begin{proposition} A graph $G$ has $\ddiam_I(G)\le 2$ if and only if $G$ is a comparability graph.
\end{proposition}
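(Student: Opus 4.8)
The plan is to reduce the whole equivalence to the single relation $G\Ra\vec I_3$ and then invoke the odd-cycle characterization of comparability graphs recalled above. First I would observe that the arrow is monotone in $n$: if $G\Ra\vec I_n$, then restricting any isometric copy of $\vec I_n$ to its first $m$ vertices yields an isometric copy of $\vec I_m$ for every $m\le n$, so $G\Ra\vec I_m$. Consequently $\{n:G\Ra\vec I_n\}$ is downward closed, and $\ddiam_I(G)\le 2$ is equivalent to $G\not\Ra\vec I_3$. It thus suffices to prove that $G\Ra\vec I_3$ holds if and only if $G$ is \emph{not} a comparability graph. I would also record the concrete meaning of an isometric copy of $\vec I_3$ in an orientation $\vec G$: it is a triple $a,b,c$ with $(a,b),(b,c)\in E_{\vec G}$ and $d_G(a,c)=2$; since $a$ and $c$ already share the common neighbour $b$, the requirement $d_G(a,c)=2$ is the same as $a$ and $c$ being non-adjacent in $G$.

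For the easy direction, suppose $G$ is a comparability graph and fix a transitive orientation $\vec G$. I claim this particular orientation contains no isometric copy of $\vec I_3$: whenever $(a,b),(b,c)\in E_{\vec G}$, transitivity forces $(a,c)\in E_{\vec G}$, so $a$ and $c$ are adjacent and $d_G(a,c)=1\ne 2$. Hence $G\not\Ra\vec I_3$, that is, $\ddiam_I(G)\le 2$.

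For the converse I argue the contrapositive. Assuming $G$ is not a comparability graph, the Ghouila-Houri / Gilmore-Hoffman characterization (\cite[Theorem~6.1.1]{BS}) supplies an odd cycle $C=(v_0,\dots,v_{2n+2})$ with $n\ge 1$ having no triangular chord, i.e.\ $\{v_i,v_{i+2}\}\notin E_G$ for every residue $i$ modulo $2n+3$. Now fix an arbitrary orientation $\vec G$ of $G$. The key step is to find a vertex $v_i$ of $C$ through which the two incident cycle-edges form a directed path, say $v_{i-1}\to v_i\to v_{i+1}$ (or the reverse). Granting this, the endpoints $v_{i-1},v_{i+1}$ are non-adjacent by the no-chord hypothesis, yet share the common neighbour $v_i$, so $d_G(v_{i-1},v_{i+1})=2$ and $v_{i-1},v_i,v_{i+1}$ form an isometric copy of $\vec I_3$. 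Since the orientation was arbitrary, this gives $G\Ra\vec I_3$.

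The main obstacle, and the only genuine content of the converse, is the parity lemma that makes the key step work: every orientation of an odd cycle has a \emph{pass-through} vertex, one of in- and out-degree $1$ along the cycle. I would prove this by fixing a reference cyclic direction and labelling each cycle-edge as forward or backward; a vertex is pass-through exactly when its two incident edges carry the same label, and a local source or sink exactly when they differ. As the number of label changes around any cyclic binary sequence is even, the absence of a pass-through vertex would make all $2n+3$ consecutive pairs differ, forcing the labels to alternate around $C$ — impossible for a cycle of odd length. Hence a pass-through vertex exists, which completes the proof.
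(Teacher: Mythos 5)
Your proof is correct and follows essentially the same route as the paper's: transitivity kills isometric copies of $\vec I_3$ in one direction, and in the other an odd chordless cycle supplied by the Ghouila-Houri/Gilmore-Hoffman characterization forces a directed path of length $2$ with non-adjacent endpoints under any orientation. The only difference is that you spell out two details the paper leaves implicit (the downward closedness of $\{n:G\Ra\vec I_n\}$ and the parity argument producing a pass-through vertex on an odd cycle), both of which are handled correctly.
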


\begin{proof} If $G$ is comparability graph, then $G$ has a transitive orientation $\vec G$. It follows that for any directed path $(v_0,v_1,v_2)$ in $\vec G$ the pair $(v_0,v_2)$ is an edge of $\vec G$ and hence $d_G(v_0,v_2)\le 1$. This means that $G\not\Ra \vec I_3$ and hence $\ddiam_I(G)\le 2$.

If $G$ is not a comparability graph, then $G$ contains an odd cycle $C$ without a triangular chord. It is easy to see that any orientation $\vec C$ of the cycle $C$ contains a directed path $(v_0,v_1,v_2)$. Since $C$ has no triangular chords, $d_G(v_0,v_2)=2$, which means that $\{v_0,v_1,v_2\}$ is an isometric copy of $\vec I_3$ in $\vec C$ and in $G$. Therefore, $\ddiam_I(G)\ge 3$.
\end{proof}

\begin{problem} Characterize graphs $G$ with $\ddiam_I(G)\le 3$ \textup{(} $\ddiam_I(G)\le n$ for $n\ge 4$ \textup{)}.
\end{problem}

\begin{problem} Characterize graphs $G$ with $\ddiam_T(G)\le 2$ \textup{(} $\ddiam_T(G)\le n$ for $n\ge 3$ \textup{)}.
\end{problem}

\begin{remark} Any cycle $C$ of odd length $n\ge 5$ satisfies $\ddiam_I(C)=3$ and $\ddiam_T(C)=2$.
\end{remark}

Now we prove a weak 3-space property for the number $\ddiam_I(G)$. By a {\em weak homomorphism} $f:G\to H$ of graphs $G,H$ we understand a function $f:V_G\to V_H$ such that for every edge $\{u,v\}$ of $G$ we have either $f(u)=f(v)$ or  $\{f(u),f(v)\}$ is an edge of $H$. For a weak homomorphism $f:G\to H$ and vertex $y$ of $H$ the preimage $f^{-1}(y)$ is a graph with the set of edges $\big\{\{u,v\}\in E_G:f(u)=y=f(v)\big\}$.

\begin{proposition} If $f:G\to H$ is a weak homomorphism of finite graphs, then
$$\ddiam_I(G)\le \max\big\{\sum_{y\in F}\ddiam_I(f^{-1}(y)):F\subset V_H,\;|F|\le\chi(H)\big\}.$$
\end{proposition}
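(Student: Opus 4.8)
The plan is to bound $\ddiam_I(G)$ from above by producing a single orientation of $G$ in which no isometric directed path is long. Since $\ddiam_I(G)$ is the largest $n$ with $G\Ra\vec I_n$ (and $G\Ra\vec I_n$ forces $G\Ra\vec I_{n-1}$, as an isometric copy of $\vec I_n$ contains one of $\vec I_{n-1}$), it suffices to exhibit one orientation $\vec G$ of $G$ whose longest isometric directed path has at most $M:=\max\big\{\sum_{y\in F}\ddiam_I(f^{-1}(y)):F\subset V_H,\ |F|\le\chi(H)\big\}$ vertices; then $G\not\Ra\vec I_{M+1}$, whence $\ddiam_I(G)\le M$.

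To build $\vec G$ I would fix a proper coloring $c:V_H\to\{1,\dots,\chi(H)\}$ of $H$ and, for each $y\in V_H$, an orientation $\vec G_y$ of the preimage graph $f^{-1}(y)$ containing no isometric directed path on $\ddiam_I(f^{-1}(y))+1$ vertices; such a $\vec G_y$ exists by the definition of $\ddiam_I$. Now orient each edge $\{u,v\}\in E_G$ using the weak-homomorphism hypothesis: if $f(u)=f(v)=:y$ then $\{u,v\}$ is an edge of $f^{-1}(y)$ and I orient it as in $\vec G_y$ (call such an edge \emph{horizontal}); otherwise $\{f(u),f(v)\}\in E_H$, so $c(f(u))\ne c(f(v))$, and I orient $\{u,v\}$ from the endpoint of smaller $c\circ f$-value to that of larger value (a \emph{cross} edge).

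Next I would read off the structure of an arbitrary directed path $P=(v_0,\dots,v_{n-1})$ in $\vec G$. The quantity $c(f(v_i))$ is constant across horizontal edges and strictly increases across cross edges, so it is nondecreasing along $P$; hence $P$ decomposes into maximal \emph{blocks} on which $f$ is constant, consecutive blocks are separated by cross edges, and the colors of successive blocks strictly increase. Consequently there are at most $\chi(H)$ blocks and their $f$-values $y_1,\dots,y_r$ are pairwise distinct, and each block is a directed path contained in a single preimage and oriented by the corresponding $\vec G_{y}$.

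The crux is to pass to the isometric setting: assuming $P$ is an isometric directed path of $\vec G$, I would show each block is an isometric directed path of its $\vec G_{y_i}$, and therefore has at most $\ddiam_I(f^{-1}(y_i))$ vertices. For two vertices $v_i,v_j$ of one block the sub-path of $P$ between them lies inside $f^{-1}(y)$ and has length $|i-j|=d_G(v_i,v_j)$; since $f^{-1}(y)$ is a subgraph we get $d_{f^{-1}(y)}(v_i,v_j)\ge d_G(v_i,v_j)=|i-j|$, while the sub-path forces $d_{f^{-1}(y)}(v_i,v_j)\le|i-j|$, so the block is isometric in $f^{-1}(y)$ and cannot exceed $\ddiam_I(f^{-1}(y_i))$ vertices. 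Summing over the at most $\chi(H)$ blocks yields $n\le\sum_{i=1}^r\ddiam_I(f^{-1}(y_i))\le M$ with $F=\{y_1,\dots,y_r\}$, bounding every isometric directed path of $\vec G$ by $M$. I expect this block-isometry step to be the only real obstacle, namely verifying that global isometry in $G$ restricts to isometry inside each preimage; the coloring bookkeeping that controls the number and distinctness of the blocks is routine.
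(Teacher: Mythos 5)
Your proposal is correct and follows essentially the same route as the paper's proof: a proper coloring of $H$, orientations of the preimages witnessing their $\ddiam_I$ values, cross edges oriented by increasing color, and a decomposition of any isometric directed path into at most $\chi(H)$ blocks, each isometric inside a single preimage. Your explicit verification that global isometry in $G$ restricts to isometry in each subgraph $f^{-1}(y)$ is the same (slightly more carefully spelled out) step the paper uses implicitly.
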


\begin{proof} By definition of the chromatic number $\chi(H)$, there exists a coloring $c:V_H\to \{1,\dots,\chi(H)\}$  of the graph $H$ such that for every edge $\{u,v\}$ of $G$ the colors $c(u)$ and $c(v)$ are distinct. For every $y\in H$ choose an orientation $\vec G_y$ of the graph $G_y=f^{-1}(y)$ such that $\vec G_y\not\Ra \vec I_k$ for $k=\ddiam_I(G_y)+1$. Let $\vec G$ be the orientation of the graph $G$ such that for an edge $\{u,v\}$ of $G$ the ordered pair $(u,v)$ is an edge of $\vec G$ if and only if either $c(f(u))<c(f(v))$ or $(u,v)$ is an edge of $\vec G_y$ for some $y\in H$. 

We claim that the digraph $\vec G$ contains no isometric copy of the graph $\vec I_{m+1}$, where
$$m= \max\Big\{\sum_{y\in F}\ddiam_I(G_y):F\subset V_H,\;|F|\le\chi(H)\Big\}.$$
Suppose on the contrary that $\vec G$ contains a directed path $(v_0,\dots,v_m)$ such that $d_G(v_0,v_m)=m$. It follows that $\big(c(f(v_0)),\dots,c(f(v_n))\big)$ is a non-decreasing sequence of numbers in the interval $\{1,\dots,\chi(H)\}$. Consequently,  for every number $i$ in the set $C=\{c(f(v_0)),\dots,c(f(v_n))\}$ the set $J_i=\{j\in \{0,\dots,n\}:c(f(v_j))=i\}$ coincides with some subinterval $[a_i,b_i]$ of  $\{0,\dots,n\}$ and the set $\{f(v_j):j\in[a_i,b_i]\}$ is a singleton $\{y_i\}$ for some vertex $y_i\in H$. It follows that  $(v_{a_i},\dots,v_{b_i})$ is a directed path isometric to $\vec I_{|[a_i,b_i]|}$ in the graph $G_{y_i}$ and hence $|[a_i-b_i]|\le\ddiam_I(G_{y_i})$. The choice of the orientation $\vec G$ guarantees that the set $F=\{y_i:i\in C\}$ has cardinality $|F|=|C|\le\chi(H)$. Then
$$
m+1=|[0,m]|=\sum_{i\in C}|[a_i,b_i]|\le\sum_{i\in C}\ddiam_I(G_{y_i})=\sum_{y\in F}\ddiam_I(G_y)\le m,
$$
which is a desired contradiction.
\end{proof}

\section{Infinite directed paths in orientations of graphs}\label{s:infty}

Now we discuss the problem of existence of infinite directed paths in orientations of graphs. Consider the infinite digraphs $\vec I_\w$ and $\vec I_{-\w}$ with $V_{\vec I_\w}=\w=V_{\vec I_{-\w}}$, $E_{\vec I_\w}=\{(i,i+1):i\in\w\}$, and
$E_{\vec I_{-\w}}=\{(i+1,i):i\in\w\}$.

First, observe that Theorem~\ref{t2} implies the following:

\begin{corollary} There exists a countable graph $G$ such that $G\Ra\vec I_n$ for every $n\in\IN$.\qed
\end{corollary}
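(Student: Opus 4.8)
The plan is to realize $G$ as a disjoint union of finite witnessing graphs, one tailored to each value of $n$. By Theorem~\ref{t2} we have $\De(\vec\Tau_n)<2^{2^{n-1}}<\infty$ for every $n\in\IN$, so for each $n$ there is a finite graph $G_n$ on $\De(\vec\Tau_n)$ vertices with $G_n\Ra\vec\Tau_n$. Since the directed path $\vec I_n$ is an oriented tree on $n$ vertices, we have $\vec I_n\in\vec\Tau_n$, and hence $G_n\Ra\vec\Tau_n$ already gives $G_n\Ra\vec I_n$. First I would fix such a finite $G_n$ for every $n$.

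Next I would set $G:=\bigsqcup_{n\in\IN}G_n$, the disjoint union of these finite graphs. As a countable union of finite graphs, $G$ has countably many vertices, so it is a countable graph. The structural fact I would record here is that in a disjoint union the distance between two vertices lying in a common summand $G_n$ is computed entirely inside $G_n$ (and equals $\infty$ for vertices in different summands). In particular $d_G$ restricted to $V_{G_n}$ coincides with $d_{G_n}$.

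Then I would verify $G\Ra\vec I_n$ for each fixed $n$. Given an arbitrary orientation $\vec G$ of $G$, its restriction to the edges of the subgraph $G_n$ is an orientation $\vec G_n$ of $G_n$. Because $G_n\Ra\vec I_n$, there is an isometric embedding $f\colon V_{\vec I_n}\to V_{\vec G_n}$. Since $f$ takes values in $V_{G_n}$ and $d_G$ agrees with $d_{G_n}$ there, the same map $f$ is an isometric embedding of $\vec I_n$ into $\vec G$, which yields $G\Ra\vec I_n$.

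The only point needing care, and thus the main (though mild) obstacle, is the claim that an isometric copy found inside a single summand remains isometric in the whole union. This is exactly where the disjoint-union distance formula is used: adjoining further connected components can only introduce distances equal to $\infty$ and never shortens any existing path, so distances within each summand are left unchanged, and the isometry carries over without loss.
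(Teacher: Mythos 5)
Your proof is correct and fills in exactly the argument the paper leaves implicit behind its \qed: take the disjoint union of finite witnesses $G_n\Ra\vTau_n$ supplied by Theorem~\ref{t2}, note $\vec I_n\in\vTau_n$, and observe that distances inside a connected component are unaffected by adjoining further components, so the isometric copies survive in the union. This is the same (and essentially the only) route; no issues.
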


On the other hand, we shall prove that each graph $G$ admits an orientation
containing no isometric copy of the digraphs $\vec I_\w$ or $\vec I_{-\w}$ and,
more generally, no directed paths of infinite diameter in $G$. (For a subset
$A\subset V_G$ of a graph $G$ its {\em diameter} is defined as
$\diam(A)=\sup\{d_G(u,v):u,v\in A\}\in\w\cup\{\infty\}$.)

A sequence $(v_n)_{n\in\w}\in V_G^\w$ of distinct vertices of a graph $G$ is
called an {\em $\w$-path} in $G$ if for every $n\in\w$ the pair
$\{v_n,v_{n+1}\}$ is an edge of $G$. An $\w$-path $(v_n)_{n\in\w}$ in a graph
$G$ is called {\em $\overrightarrow\w$-directed} (resp.
$\overleftarrow{\w}$-{\em directed}) in an orientation $\vec G$ of $G$ if for
every $n\in\w$ the pair $(v_n,v_{n+1})$ (resp. $(v_{n+1},v_n)$) is a directed
edge of $\vec G$.
An $\w$-path in $G$ is called {\em directed} in an orientation $\vec G$ of $G$ if it is either $\overrightarrow\w$-directed or $\overleftarrow\w$-directed.

The Ramsey Theorem implies that every orientation of the complete countable graph $K_\w$ contains $\vec I_\w$ or~$\vec I_{-\w}$. On the other hand, we have the following result.

\begin{theorem}\label{t3} Every graph $G$ has an orientation $\vec G$ containing no directed $\w$-paths of infinite diameter in $G$. This implies that $G\not\Ra\vec I_\w$ and $G\not\Ra\vec I_{-\w}$.
\end{theorem}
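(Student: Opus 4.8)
The plan is to construct the desired orientation $\vec G$ explicitly from a breadth-first layering of $G$, using a parity trick on the edges that cross between consecutive layers. First I would reduce to the connected case: since a directed $\w$-path consists of distinct vertices joined by edges, it lies entirely within one connected component of $G$, so it suffices to orient each component separately. Fix a component and a root vertex $r$ in it, and set $\rho(v)=d_G(r,v)$, which is finite for every vertex of the component. The key preliminary observation is that a directed $\w$-path $(v_n)_{n\in\w}$ has infinite diameter in $G$ if and only if the set $\{\rho(v_n):n\in\w\}$ is unbounded: if $\rho\le R$ along the path then $d_G(v_i,v_j)\le\rho(v_i)+\rho(v_j)\le 2R$ for all $i,j$, while if $\rho(v_{n_k})\to\infty$ then $d_G(v_0,v_{n_k})\ge\rho(v_{n_k})-\rho(v_0)\to\infty$. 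Thus it is enough to produce an orientation in which $\rho$ stays bounded along every directed $\w$-path.

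Next I would describe the orientation. Put $L_k=\{v:\rho(v)=k\}$; since adjacent vertices have $\rho$-values differing by at most $1$, every edge is either \emph{horizontal} (inside some $L_k$) or \emph{crossing} (between $L_k$ and $L_{k+1}$). I orient the horizontal edges arbitrarily, and I orient each crossing edge between $L_k$ and $L_{k+1}$ so that it points from the even-indexed layer to the odd-indexed one: upward ($L_k\to L_{k+1}$) when $k$ is even, and downward ($L_{k+1}\to L_k$) when $k$ is odd. With this rule every crossing edge points from an even layer into an odd layer, so each vertex of an odd layer is a \emph{sink} for crossing edges: its only outgoing edges are horizontal.

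The main argument is then a confinement statement: every directed (finite or $\w$-) path in $\vec G$ meets at most two adjacent layers. Following such a path forward, horizontal steps keep $\rho$ constant while a crossing step always moves from an even layer to an adjacent odd one; once the path enters an odd layer it can never leave it, since odd-layer vertices have no outgoing crossing edges. Hence a directed path stays in a single even layer, makes at most one crossing step to an adjacent odd layer, and then remains there, so its $\rho$-values take at most two consecutive integer values. For a forward-directed $\w$-path this immediately bounds $\rho$; for a backward-directed $\w$-path $(v_n)$ I apply the confinement statement to each finite directed path $v_n\to v_{n-1}\to\cdots\to v_0$ to obtain $|\rho(v_n)-\rho(v_0)|\le 1$ for all $n$, again bounding $\rho$. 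By the preliminary observation every directed $\w$-path therefore has finite diameter, which is the assertion. The statements $G\not\Ra\vec I_\w$ and $G\not\Ra\vec I_{-\w}$ follow at once, since an isometric copy of $\vec I_\w$ (resp.\ $\vec I_{-\w}$) would be a forward- (resp.\ backward-) directed $\w$-path of infinite diameter, and $\vec G$ contains none.

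I expect the only real subtlety to be exactly the one the parity trick is designed to overcome. The naive orientation sending every crossing edge downward does bound $\rho$ along forward-directed paths, but it permits backward-directed paths that climb through infinitely many layers, and symmetrically for the all-upward orientation; neither orientation handles both directions. The even-to-odd rule controls both at once, because it turns every odd layer into a crossing-sink and thereby forbids any directed path from threading through three or more layers.
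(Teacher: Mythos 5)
Your proof is correct and uses essentially the same construction as the paper: the same breadth-first layering by distance to a root, with crossing edges oriented from even layers to odd layers and horizontal edges oriented arbitrarily. The paper obtains a contradiction from two specific forced crossing edges along a putative directed path of unbounded level, while you package the identical mechanism as a confinement lemma (every directed path meets at most two adjacent layers); this is only a presentational difference, and your reduction from ``infinite diameter'' to ``unbounded level'' is the same step the paper uses implicitly.
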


\begin{proof} Without loss of generality, the graph $G$ is connected. Fix any vertex $o$ in $G$ and for every vertex $v$ of $G$ let $\|v\|$ be the smallest length of a path linking the vertices $v$ and $o$. Choose an orientation $\vec G$ of $G$ such that for any edge $\{u,v\}$ in $G$ with $\|v\|=\|u\|+1$ the pair $(u,v)$ is an edge of $\vec G$ if $\|u\|$ is even and $(v,u)$ is an edge of $\vec G$ if $\|u\|$ is odd.

We claim that the orientation $\vec G$ contains no directed $\w$-paths of infinite diameter. To derive a contradiction, assume that $(v_n)_{n\in\w}$ is a directed $\w$-path of infinite diameter. Fix any even number $n\in\w$ such that $\|v_0\|<n$. Since the $\w$-path $(v_n)_{n\in\w}$ has infinite diameter, there exists a number $k\in\w$ such that $\|v_k\|\ge n$. We can assume that $k$ is the smallest number with this property. Taking into account that $\big|\|v_n\|-\|v_{n+1}\|\big|\le 1$ for all $n\in\w$, we conclude that $\|v_k\|=n>\|v_0\|$ and $\|v_{k-1}\|=n-1$, and hence $(v_{k-1},v_k)$ is an edge of $\vec G$. Let also $m$ be the smallest number such that $\|v_m\|\ge n+1$. For this number we get $\|v_m\|=n+1$, $\|v_{m-1}\|=n$ and hence $(v_{m},v_{m-1})$ is a directed edge $\vec G$. Since both pairs $(v_{k-1},v_k)$ and $(v_m,v_{m-1})$ are directed edges of the oriented graph $\vec G$, the $\w$-path $(v_n)_{n\in\w}$ is not directed in $\vec G$.
Since the graphs $\vec I_\w$ and $\vec I_{-\w}$ have infinite diameters, the digraph $\vec G$ does not contain isometric copies of $\vec I_\w$ or $\vec I_{-\w}$.
\end{proof}

\begin{remark} Theorem~\ref{t3} implies that every locally finite graph $G$
admits an orientation containing no directed  $\w$-paths.
\end{remark}

\section{Acknowledgments}

The authors express their sincere thanks to Yoshiharu Kohayakawa for suggesting the proof of Theorem~\ref{t:Koha} (which resolves a problem posed in an earlier version of this paper) and to Tomasz \L uczak for valuable discussions related to random graphs.


\end{document}